\documentclass{article}

\RequirePackage[OT1]{fontenc}
\RequirePackage{verbatim, amscd,epsfig,amsmath,amsthm,amstext,amssymb,tikz}
\usepackage{mathrsfs}
\usepackage[margin=3cm]{geometry}
\usepackage{tikz}
\usetikzlibrary{patterns}
\usepackage{caption}
\usepackage{subcaption}
\usepackage[all]{xy}
\newcommand{\lightercolor}[3]{
    \colorlet{#3}{#1!#2!white}
}
\newcommand{\darkercolor}[3]{
    \colorlet{#3}{#1!#2!black}
}
\darkercolor{gray}{50}{dgray}
\lightercolor{gray}{50}{lgray}

\tikzset{
l/.style={gray,dashed},
ll/.style={red,dashed},
grn/.style=green!70!black,
b/.style=blue,
r/.style=red,
p/.style=purple}
\usetikzlibrary{plotmarks}
\numberwithin{equation}{section}
\theoremstyle{plain}
\newtheorem*{proposition*}{Proposition}
\newtheorem{theorem}{Theorem}[section]
\newtheorem{lemma}[theorem]{Lemma}
\newtheorem{cor}[theorem]{Corollary}
\newtheorem{corollary}[theorem]{Corollary}

\newtheorem{thm}[theorem]{Theorem}

\theoremstyle{definition}

\newtheorem{defn}[theorem]{Definition}

\newtheorem{remark}[theorem]{Remark}


\renewcommand{\d}{\mbox{dist}}
\newcommand{\M}{\mathcal{M}}
\newcommand{\PHT}{\operatorname{PHT}}
\newcommand{\ECT}{\operatorname{ECT}}
\newcommand{\PHzeroT}{\operatorname{PH_0T}}

\newcommand{\Z}{\mathbb{ Z}}
\newcommand{\coker}{\operatorname{coker}}
\newcommand{\rank}{\operatorname{rank}}
\newcommand{\R}{\mathbb{ R}}

\newcommand{\D}{\mathcal{D}}

\newcommand{\bb}{\mathrm{b}}
\newcommand{\dd}{\mathrm{d}}
\newcommand{\im}{\operatorname{im}}

\usepackage{url}
\title{Persistent Homology Transform for Modeling Shapes and Surfaces}

\author{%
{\sc Katharine Turner},\\[2pt]
Department of Mathematics\\
University of Chicago \\
Chicago, IL 60637 USA\\
{Corresponding author: kate@math.uchicago.edu}\\[6pt]
{\sc Sayan Mukherjee}\\[2pt]
Departments of Statistical Science\\
Computer Science, and Mathematics \\
Duke University \\
Durham, NC 27708 USA\\
{sayan@stat.duke.edu}\\[6pt]
{\sc and}\\[6pt]
{\sc  Doug M Boyer}\\[2pt]
Department of Evolutionary Anthropology \\
Duke University \\
Durham, NC 27708 USA\\
{doug.boyer@duke.edu}}

\begin{document}


\maketitle

%
%
%
%
%
%
%
\begin{abstract}
{In this paper we introduce a statistic, the persistent homology transform (PHT), 
to model surfaces in $\R^3$ and shapes in $\R^2$. This statistic is a collection of persistence diagrams
-- multiscale topological summaries used extensively in topological data analysis.
We use the PHT to represent shapes and execute operations such as computing 
distances between shapes or classifying shapes. We prove the map 
from the space of simplicial complexes in $\R^3$ into the
space spanned by this statistic is injective. This implies that the 
statistic is a sufficient statistic for probability densities on the space of 
piecewise linear shapes. We also show that a variant of this statistic, the
Euler Characteristic Transform (ECT), admits a simple exponential
family formulation which is of use in providing likelihood based
inference for shapes and surfaces. We illustrate the utility of 
this statistic on  simulated and real data.}
{persistence homology, surfaces, shape spaces, sufficient shape statistics}\\
Insert classification here
\end{abstract}
%
%


\section{Introduction}

In this paper we introduce a sufficient statistic, the persistent homology transform (PHT), to model objects and surfaces in $\R^3$ and shapes in $\R^2$. This result is of interest to three communities, the shape statistics community \cite{Bookstein97,DrydenMardia98,Kendall77,Kendall84}, the topological data analysis (TDA) community \cite{BubCarKimLuo2010,Carlsson09,deSilvaGhrist07,EdeHar2010,gamble2010}, and applied statisticians and domain researchers modeling shapes including medical imaging \cite{BandulasiriBP09,Styneretal06} and morphology \cite{Bookstein97,Boyeretal11,Zelditch2004}.

Fitting curves and surfaces to model shapes has many applications in a
variety of fields. A concrete example of central interest to one of
the authors is computing the distance between heel bones in primates
to generate a tree and comparing this tree to a tree generated from
the genetic distances between the primate species 
\cite{Boyeretal13}. The central problem in almost all approaches to modeling surfaces and shapes is obtaining a representation of the shape that can be used in statistical models. 

In this paper we show that a collection of persistence diagrams --
multiscale topological summaries used extensively in topological data
analysis -- are sufficient statistics for shape and surface
models. This is of interest to the topological data analysis community
since it is the first formal demonstration that persistent homology
\cite{Carlsson09,EdeHar2010}, the dominant tool used in TDA,  does not
result in the loss of information. For the shape statistics community
this is the first result that we know of that applies a sufficient statistic for shapes or surfaces (besides the
obvious and not very useful sufficient statistic of the data themselves). Almost all statistical models start with a set of landmarks provided by the user as an 
initialization step, a probability model is then placed on these landmarks. From the perspective of the likelihood principle \cite{BergerWolpert84} statistical inference should proceed from a probability model on the shapes themselves, unless the landmarks are sufficient statistics for shapes. In this paper 
we provide a sufficient statistic for shapes and surfaces. This suggests that in theory a generative or sampling model on shapes or surfaces should
 be possible in shape statistics. Indeed in Section \ref{expfamily} we
 show how we can use sufficient statistics for likelihood based inference.
 
Statistical models of shapes (characterized as a set of landmarks)
were pioneered in the works of Kendall and Bookstein \cite{Bookstein97,Kendall77,Kendall84}. The central idea developed in this line of work was the shape space, a differentiable manifold often with appropriate Riemannian structures. See \cite{BandulasiriBP09, Bhattacharya2008,Bhattacharya2005,Dryden2013,Schmidler07}
for recent results on statistical analysis of shapes. Another line of
research we draw from is modeling shapes using multiscale topological
summaries of data. The key idea we draw upon from this discipline is
the elevation function \cite{Agarwaletal04,Wang05} which was developed
as an application of discrete Morse theory to problems in protein
structure modeling. 

An alternative approach to modeling shapes comes from the formulation
by Grenander \cite{DepuisGrenander98}. In this formulation shapes are
considered as points on an infinite-dimensional manifold and variation
in shape is modeled by the action of Lie groups on these
manifolds. This is a very appealing paradigm but is computationally
intensive and requires the parameterization of the shape manifold.

The ideas we present in this paper are closely related to ideas in
integral geometry that have been used to model surfaces and random
fields
\cite{stoyan_stochastic_1987,worsley_boundary_1995,worsley_estimating_1995}
as well as point processes \cite{Diggle2003, MW2003,Ripley,stoyan_stochastic_1987}. The central idea to the integral geometric
approach was to study invariant integral transforms from the space of
functions on surfaces or shapes to spaces of functions that are more
convient for analysis such as functions on an interval of the real line.
The idea is that one can more easily
manipulate, compute, and model in the transformed space. A classic
example of a widely used integral transform is the Radon 
transform \cite{Schapira91}, see \cite{klain1997introduction} for
details on classic ideas in integral geometry including Minkowski
functionals and Hadwiger integrals.

We begin the paper with topological preliminaries and relevant definitions in Section \ref{prelims}.  In Section \ref{section:suff}
we first state and prove conditions under which the PHT is a sufficient statistic for surfaces and shapes. We then state sufficiency results
for the  $0$-th dimensional PHT for surfaces that are homeomorphic to a sphere. We end the section with a discussion on the setting when the objects
are not aligned and we have to quotient out rotation, scaling, and rotation as is normally done in shape statistics. In Section 
\ref{results} we show the efficacy of our method for computing distances between shapes and surfaces in simulated data as well as real data.
For the simulated data we demonstrate that we can work with unaligned
object. We close with a discussion.

\subsection{A motivating example}

The classical problem in morphology of measuring distances between bones often
is realized as measuring the distance between the surfaces of the
bones. Historically this problem has been very amenable to classical
shape statistics as the information about a bone was stored as a set
of landmark points on the bone and distances between the landmark 
points. However, with the increased prevalence of 
scanning technologies such as computerized tomography (CT) scans
bones are now often represented as meshes. In Figure \ref{teeth}
we display a snapshot of the meshes of five teeth. Understanding
variation in a set of bones or teeth by providing estimates of distances
between the surfaces in an automated fashion 
\cite{Boyeretal11,Boyeretal13,Gladmanetal13} would be of great
practical importance.

{\begin{figure}[hbt]
\begin{center}
\includegraphics[height=1.4in]{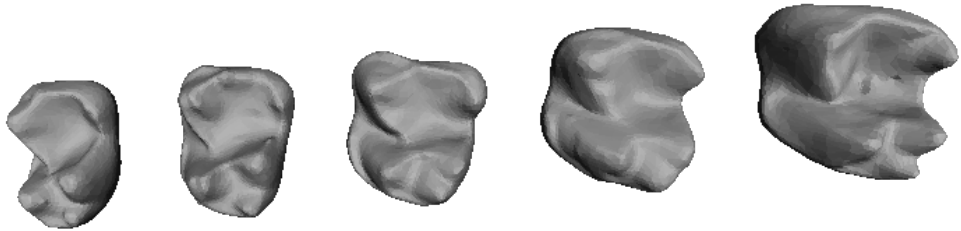}
\caption{\label{teeth} Images of the meshes of five teeth. A common
  problem in morphology is to measure distances between these five teeth.
}
 \end{center}
\end{figure}

In \cite{Boyeretal11} a procedure to measure distances between surfaces,
such as the boundary surfaces of teeth, was developed based on using conformal geometry to
construct flattened representations of pairs of surfaces followed by
continuous Procrustes distance \cite{LipmanDaub13} to measure the
distance between the surfaces. A setting when this approach will have
problems is if one wants to measure distances between objects that are
not isomorphic. For example, if one of the teeth were broken then
generating a conformal map from the broken region of the tooth to the
corresponding intact region of the other tooth will be a problem. 

We will apply the PHT to measure distances between
surfaces. Specifically, in Section \ref{results} we will use the 
PHT to measure distances beween the heel bones of 106 primates. The basic
approach will be to transform each bone using the PHT and use standard
distances between persistencce diagrams to measure pairwise distances between
bones. We will then cluster these distances to propose evolutionary
relations between the primate species. One ultity of our approach is 
that we do not have to compute a correspondence between the bones, 
as is required in a conformal map. In the case where correspondences
are very unstable as would be the case when objects are 
not isomorphic our procedure should be more robust.

\section{Persistence diagrams and height functions}\label{prelims}

\subsection{Definitions and topological preliminaries}

Persistent homology is a computational method for measuring changes in homology of a filtration of simplicial complexes. We first review the notion of a simplicial complex and simplicial homology. The computation of persistent homology requires a field, in general simplicial homology can be computed over any ring. In this paper and in most of topological data analysis the field is $\Z_2$, due to computational reasons.

Simplices are the elementary objects on which we will operate. Examples are points, lines, triangles, and $k$-dimensional generalizations. Formally,
a \emph{$k$-simplex} is the convex hull of $k+1$ affinely independent points $v_0,v_1, \ldots v_k$ and is denoted $[v_0,v_1,\ldots,v_k]$. For example, the $0$-simplex $[v_0]$ is the vertex $v_0$, the $1$-simplex $[v_0,v_1]$ is the edge between the vertices $v_0$ and $v_1$, and the $2$ simplex $[v_0, v_1, v_2]$ is the triangle bordered by the edges $[v_0,v_1]$, $[v_1, v_2]$ and $[v_0, v_2]$. 

A simplicial complex consists of simplices glued together with certain rules. To define the rules we first define the face of a simplex. We call $[u_0, u_1, \ldots u_j]$ a \emph{face} of $[v_0,v_1, \ldots v_k]$ if $\{u_0, u_1, \ldots u_j\}\subset \{v_0,v_1, \ldots v_k\}$. A \emph{simplicial complex} $M$ is a countable set of simplices such that
\begin{enumerate}
\item every face of a simplex in $M$ is also in $M$;
\item if two simplices $\sigma_1,\sigma_2$ are in $M$ then their intersection is either empty or a face of both $\sigma_1$ and $\sigma_2$.
\end{enumerate}

Given finite simplicial complex $K$, a \emph{simplicial $k$-chain} is a formal linear combination (over $\Z_2$ in this paper) of $k$-simplices in $K$. The set of $k$-chains forms a vector space $C_k(K)$. We define the boundary map $\partial_k:C_k(K) \to C_{k-1}(K)$ as
$$\partial_k\left([v_0, v_1, \ldots v_k]\right) = \sum_{j=0}^k (-1)^j[v_0,\ldots, v_{j-1}, v_{j+1},  \ldots v_k]$$
and extending linearly. 

Elements of $B_k(K) = \operatorname{im} \partial_{k+1}$ are called boundaries and elements of $Z_k(K)=\operatorname{ker} \partial_k$ are called cycles.\footnote{$\operatorname{im}(f)= \{y: f(x)=y \text{ for some $x$}\}$ is called the image of $f$ and $\operatorname{ker}(f) = \{x: f(x)=0\}$ is called the kernel of $f$.}  Direct computation shows $\partial_{k+1}\circ\partial_k=0$ and hence $B_k(K) \subseteq Z_k(K)$. This allows us to define the $k$-th \emph{homology group} of $M$ as
$$H_k(K):=Z_k(K)/B_k(K).$$

We now consider the construction of persistence diagrams. We are given a filtration $K = \{K_r | r\in \R\}$ of a countable simplicial complex indexed over the positive real numbers, thought of as time. By this we mean that each $K_a$ is a simplicial complex and that $K_a \subseteq K_b$ for $a<b$. We wish to summarize how the topology of the filtration changes over time. For $a <b$ we have an inclusion map of simplicial complexes $\iota: K_a \to K_b$. This induces inclusion maps $$\iota:B_k(K_a) \to B_k(K_b) \quad \text{and}\quad \iota: Z_k(K_a) \to Z_k(K_b).$$
This induces homomorphisms (which are generally not inclusions) $$\iota_k^{a\to b}:H_k(K_a) \to H_k(K_b).$$
We can define the persistence homology groups $H_k(a,b)$ by
$$H_k(a,b) :=Z_k(K_a)/(Z_k(K_a) \cap B_k(K_b)).$$
$H_k(a,b)$ is the group of homology classes in $H_k(K_a)$ which persist to $H_k(K_b)$ or in other words the image of $\iota_k^{a\to b}$.

We say that a homology class $\alpha \in H_k(K_i)$ is \emph{born} at time $a$ (denoted $\bb(\alpha)$) if it is in the cokernel of $\iota_k^{a' \to a}$ for any $a'<a$.The cokernel of $f:X\to Y$ is $Y/\operatorname{im}f$. It can be thought of as the vector subspace  of $Y$ which is perpendicular to $\operatorname{im}f$. More precisely we can say an entire coset is born but this can be represented by the element in the vector space perpendicular to $\im f$.

 For $\alpha$ born at time $a$, we say that $\alpha$ \emph{dies} at time $b$ (denoted $\dd(\alpha)$) if for all $a'<a<b'<b$ we have $\iota_k^{a\to b}(\alpha) \in\im  \iota_k^{a' \to b}$ but $\iota_k^{a\to b'}(\alpha) \notin \operatorname{im}  \iota_k^{a' \to b' }$. Informally we can think of the process of dying as either becoming zero or merging into a pre-existing homology class. For example, suppose we have two connected components, one represented by the $H_0$ class $\alpha_0$ and is born at time $0$ and the other represented by the $H_0$ class $\alpha_1$ and is born at time $1$. If these components become connected at time $2$, then we say $\alpha_1$ dies at time $2$. We say that $\alpha$ is an \emph{essential class} of $K$ if it never dies. We say the homology class $\alpha$ has \emph{persistence} $\dd(\alpha)-\bb(\alpha)$.
%

Let $\R^{2+}:=\{(a,b) \in (-\infty \cup \R) \times (\R\cup \infty): a<b\}$. We define the $k$-th persistence diagram corresponding to the filtration $K$ to be the multi-set of points in $\R^{2+}$ alongside countably infinite copies of the diagonal such that the number of points (counting multiplicity) in $[-\infty, a] \times [b,\infty]$ is equal to the dimension of $H_k(a,b)$. That is, it is equal to the dimension of the space of $k$-dimensional homology classes that are born at or before $a$ and die at or after $b$. This is achieved by placing at each $(a,b)$ a number of points equal to dimension of the space of $k$-dimensional homology classes that are born at time $a$ and die at time $b$. The countably infinite copies of the diagonal play the role of persistent homology classes whose persistence is zero and hence would not otherwise seen. 

We restrict our attention to persistence diagrams such that 
$$\sum_{ \alpha \text{ not essential}} \dd(\alpha)-\bb(\alpha) <\infty.$$
This is automatically true if the persistence diagrams contain finitely many off diagonal points. 

Let us consider an example. Consider the simplicial complex in the plane shown in Figure 2 and let us use a filtration by sublevel sets of vertical height (as shown by the arrow).

\begin{figure}[hbt]

\begin{center}

\begin{tikzpicture}[scale=0.75 , baseline=5pt]\label{fig:ex_plane}
\fill (0,0) node [below] {$v_0$}circle (2pt);
\fill (-1, 0.5) node [below] {$v_1$}circle (2pt);
\fill (1, 0.75) node [right] {$v_2$}circle (2pt);
\fill (0.5, 1.5) node [right] {$v_3$}circle (2pt);
\fill (-0.5, 2) node [above] {$v_5$}circle (2pt);
\fill (-2, 1.25) node [above] {$v_4$}circle (2pt);
\draw (-1,0.5)--(-0.5,2)--(0.5,1.5)--(1,.75)--(0,0)--(0.5,1.5);
\draw[pattern=north west lines, pattern color=black] (0,0)--(0.5,1.5)--(1,0.75);
\draw (2.75,1.75)--(3,2)--(3.25,1.75);
\draw (3,0)--(3,2);
\end{tikzpicture} 
\end{center}
\caption{The simplicial complex $K$  and the vertical height direction}
\end{figure}
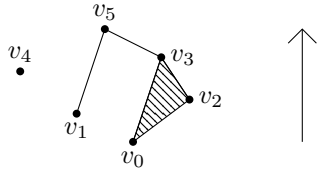

The subcomplex at time $t$ is the set of all simplices that entirely lie at or below height $t$. This constructs the filtration in Figure 3. We then can keep track of how the $0$-th dimensional homology changes as we progress through the filtration. 
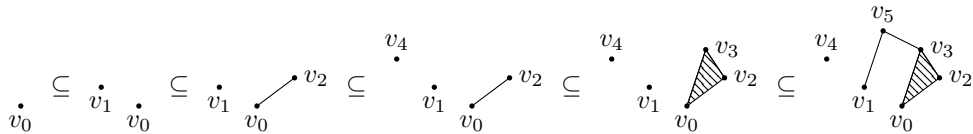
\begin{figure}[hbt]\label{fig:ex_filt_plane}
\begin{center}
\begin{tikzpicture}[scale=0.5 , baseline=5pt]
\fill (0,0) node [below] {$v_0$}circle (2pt);
\end{tikzpicture} 
$\subseteq$
\begin{tikzpicture}[scale=0.5 , baseline=5pt]
\fill (0,0) node [below] {$v_0$}circle (2pt);
\fill (-1, 0.5) node [below] {$v_1$}circle (2pt);
\end{tikzpicture} 
$\subseteq$
\begin{tikzpicture}[scale=0.5 , baseline=5pt]
\fill (0,0) node [below] {$v_0$}circle (2pt);
\fill (-1, 0.5) node [below] {$v_1$}circle (2pt);
\fill (1, 0.75) node [right] {$v_2$}circle (2pt);
\draw(0,0)--(1,0.75);
\end{tikzpicture} 
$\subseteq$
\begin{tikzpicture}[scale=0.5 , baseline=5pt]
\fill (0,0) node [below] {$v_0$}circle (2pt);
\fill (-1, 0.5) node [below] {$v_1$}circle (2pt);
\fill (1, 0.75) node [right] {$v_2$}circle (2pt);
\fill (-2, 1.25) node [above] {$v_4$}circle (2pt);
\draw(0,0)--(1,0.75);
\end{tikzpicture} 
$\subseteq$
\begin{tikzpicture}[scale=0.5 , baseline=5pt]
\fill (0,0) node [below] {$v_0$}circle (2pt);
\fill (-1, 0.5) node [below] {$v_1$}circle (2pt);
\fill (1, 0.75) node [right] {$v_2$}circle (2pt);
\fill (0.5, 1.5) node [right] {$v_3$}circle (2pt);
\fill (-2, 1.25) node [above] {$v_4$}circle (2pt);
\draw (0.5,1.5)--(1,.75)--(0,0)--(0.5,1.5);
\draw[pattern=north west lines, pattern color=black] (0,0)--(0.5,1.5)--(1,0.75);
\end{tikzpicture} 
$\subseteq$
\begin{tikzpicture}[scale=0.5 , baseline=5pt]
\fill (0,0) node [below] {$v_0$}circle (2pt);
\fill (-1, 0.5) node [below] {$v_1$}circle (2pt);
\fill (1, 0.75) node [right] {$v_2$}circle (2pt);
\fill (0.5, 1.5) node [right] {$v_3$}circle (2pt);
\fill (-0.5, 2) node [above] {$v_5$}circle (2pt);
\fill (-2, 1.25) node [above] {$v_4$}circle (2pt);
\draw (-1,0.5)--(-0.5,2)--(0.5,1.5)--(1,.75)--(0,0)--(0.5,1.5);
\draw[pattern=north west lines, pattern color=black] (0,0)--(0.5,1.5)--(1,0.75);
\end{tikzpicture} 

\end{center}
\caption{The filtration of $K$ by height in direction $v$. Each simplex is included at its maximal height}
\end{figure}

A component ($v_0$) is born in the first stage of the filtration at time $t_0$. This component corresponds to an essential class that lives throughout the the rest of the filtration.  It corresponds to a point in the persistence diagram at $(t_0,\infty)$. At $t_1$ of the filtration a new component appears ($v_1$). It joins the first component at the last stage so corresponds to a point in the persistence diagram at $(t_1, t_5)$. Another component appears ($v_4$) at $t_3$. This component is always separate and hence it corresponds to an essential class. It is represented by a point in the persistence diagram at $(t_3, \infty)$. In this example there is no homology class of dimensions greater than $0$ so the higher dimensional persistence diagrams have no off diagonal points.

Let $\D$ denote the space of persistence diagrams. There are many choices of metric on $\D$ just like there are many choices of metric on spaces of functions. It is worth mentioning that the coordinates of the points in the persistence diagrams have special meanings and hence deserve to be treated (somewhat) individually. A small, localised change in a filtration will often affect only one the two coordinates. Let $X$ and $Y$ be persistence diagrams. We can considers bijections $\phi$ between the points and copies of the diagonal in $X$ and the points and copies of the diagonal in $Y$. These bijections are the transport plans that we consider. Bijections always exist because there are countably many copies of the diagonal which everything can be paired with. Define
\begin{align}\label{eq:dp}
{\d}_p(X,Y) &=\left(\inf_{\phi:X \to Y} \sum_{x\in X} \|x-\phi(x)\|_p^p\right)^{1/p}.
\end{align}  
We call a bijection optimal if it achieves the infimum in \eqref{eq:dp}. That such an optimal bijection always exists (but is not necessarily unique) is proved in \cite{Turner13}. We illustrate in Figure \ref{fig:optimalbijection} an example of the optimal bijection between two persistence diagrams. To see more details about the range of metric choices see 
 \cite{Turner13,villani2009optimal}.
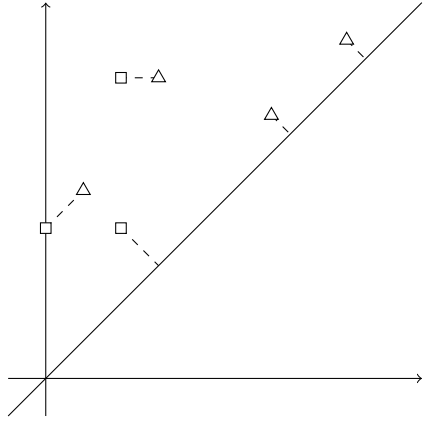
\begin{figure}[hbt]

\begin{center}
\begin{tikzpicture}[scale=.5]
\draw []   (-1,-1) -- (10,10);
\draw [->] (-1, 0) -- (10,0);
\draw [->] ( 0,-1) -- (0,10);

\draw[dashed] (2,4) -- (3,3);
\draw[dashed] (0,4) -- (1,5);
\draw[dashed] (3,8) -- (2,8);
\draw[dashed] (6,7) -- (6.5,6.5);
\draw[dashed] (8,9) -- (8.5,8.5);

\draw plot[only marks, mark=square*,mark options={ mark size=4pt, fill=white}] coordinates{
(0,4) (2,4)(2,8)};

\draw plot[only marks, mark=triangle*,mark options={mark size=6pt, fill=white}] coordinates{
(1,5)(3,8) (6,7)(8,9)};

\end{tikzpicture}
\end{center}
\caption{The dashed lines indicate an optimal bijection from the persistence diagram consisting of the square points (and copies of the diagonal) to the persistence diagrams consisting of the triangle points (and copies of the diagonal).}\label{fig:optimalbijection}
\end{figure}
We will consider a choice of metric which is analogous to $1$-Wasserstein distances on the space of measures, or $L_1$ distances on the space of functions on a discrete set, with $p=1$ in \eqref{eq:dp}. From now on let $\d(X,Y)$ denote $\d_1(X,Y)$. The sufficiency results automatically hold for any choice of metric but we found the $1$-Wasserstein distance to perform better than other metrics in empirical studies. We suspect that the variation in performance for different distance metrics is driven by variation in the pairing of points to the diagonal.

\subsection{Computing the persistence homology transform}

Let $M$ be a subset of $\R^d$ which can be written as a finite simplicial complex. For any unit vector $v \in S^{d-1}$ we define a filtration $M(v)$ of $M$ parameterized by a height $r$ where
$$M(v)_r = \{x \in M: x\cdot v \leq r\}$$
is the subcomplex of $M$ containing all the simplices below height $r$ in the direction $v$. $M(v)_r$ and $ \{ \Delta\in M: x\cdot v \leq r \text{ for all } x\in \Delta\}$ are homotopy equivalent\footnote{Two spaces are ``homotopy eqivalent'' if we can continuously deform (without tearing or glueing) one into the other. Homology is invariant under such continuous changes.} and hence their homologies are the same. The $k$-th dimensional persistence diagram, $X_k(M,v)$, summarizes how the topology of the filtration $M(v)$ changes over the height parameter $r$. By stability results on persistence diagrams \cite{CohEdeHar2007,Turneretal13},  the map $v \mapsto X_k(M,v)$ is continuous.

\begin{lemma}\label{lem:cont}
The map $v \mapsto X_k(M,v)$ is Lipschitz (and hence also continuous) with respect to the distance metric $\d(\cdot,\cdot)$ whenever $M$ is a finite simplicial complex.
\end{lemma}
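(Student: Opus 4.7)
The plan is to reduce the Lipschitz statement for $v \mapsto X_k(M,v)$ to a stability result for persistence diagrams of sublevel-set filtrations, applied to filter functions that themselves depend Lipschitz-continuously on $v$.

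First, I would replace the geometric filtration $M(v)_r$ by the combinatorial one obtained from the filter function $f_v : M \to \R$ defined on simplices by $f_v(\sigma) = \max_{x \in \mathrm{vert}(\sigma)} x\cdot v$. Because $M(v)_r$ is homotopy equivalent to the subcomplex $\{\sigma \in M : f_v(\sigma) \le r\}$ (as noted in the paragraph preceding the lemma), the persistence diagram $X_k(M,v)$ coincides with the $k$-th persistence diagram of the sublevel-set filtration of $f_v$. For two unit vectors $v,w \in S^{d-1}$ and any vertex $x \in \mathrm{vert}(M)$, Cauchy--Schwarz gives $|x\cdot v - x\cdot w| \le \|x\|\,\|v-w\|$, and since $M$ is a finite simplicial complex its vertices are bounded in norm by some $R < \infty$. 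Taking maxima over vertices of each simplex preserves this, so
\begin{equation*}
\|f_v - f_w\|_\infty := \max_{\sigma \in M}|f_v(\sigma) - f_w(\sigma)| \le R\,\|v-w\|.
\end{equation*}

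Next I would invoke the Wasserstein stability theorem for persistence diagrams of filter functions on finite simplicial complexes (as established in the references the paper cites, e.g.\ \cite{CohEdeHar2007,Turner13}). In its $p=1$ form, this states that for two monotone filter functions $f,g$ on a finite simplicial complex $K$,
\begin{equation*}
\d_1\bigl(\mathrm{Dgm}_k(f),\mathrm{Dgm}_k(g)\bigr) \le \sum_{\sigma \in K} |f(\sigma) - g(\sigma)|.
\end{equation*}
Applied to $f=f_v$ and $g=f_w$, together with the vertex estimate above, this yields
\begin{equation*}
\d\bigl(X_k(M,v), X_k(M,w)\bigr) \le |M|\cdot R\,\|v-w\|,
\end{equation*}
where $|M|$ is the number of simplices in $M$. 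Setting $C = R\,|M|$ gives the desired Lipschitz bound, with continuity as an immediate corollary.

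The only subtle point, and the main obstacle to check carefully, is making sure that the appropriate form of the Wasserstein stability theorem applies to the filter function $f_v$ in the $p=1$ (i.e.\ $\d = \d_1$) setting. The bottleneck stability theorem (sup-norm version) is standard and almost immediate, but its refinement to $1$-Wasserstein on finite complexes requires that the filter function change only finitely many simplex values, each by a controlled amount; this is exactly what $\|f_v - f_w\|_\infty \le R\|v-w\|$ combined with $|M|<\infty$ provides, so the reduction goes through cleanly. The other ingredients (homotopy equivalence of the two definitions of the filtration, and boundedness of vertex norms) are elementary consequences of $M$ being a finite simplicial complex in $\R^d$.
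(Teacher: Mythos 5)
Your proof is correct and follows essentially the same route as the paper: bound the sup-norm difference of the height/filter functions by $R\|v-w\|$ using Cauchy--Schwarz and the boundedness of a finite complex, then convert this to a $1$-Wasserstein bound on diagrams via a stability theorem with a finite multiplicative constant. The only difference is cosmetic: the paper uses bottleneck stability together with a bound $N$ on the number of off-diagonal points (giving constant $NK$), whereas you invoke the cellular $\ell_1$-Wasserstein stability bound summed over simplices (giving constant $|M|R$); both yield a finite Lipschitz constant by the same mechanism.
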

\begin{proof}
Since $M$ is a finite simplicial complex there is a bound $N$ on the number of off diagonal points in any diagram $X_k(M,v)$.  There also is a bound $K$ on the distance of any point in $M$ to the origin. Consider the functions $h_{v_1}$ and $h_{v_2}$ on $M$ which are the height functions in directions $v_1$ and $v_2$ respectively. That is for $x$ in $M$ we have $h_{v_i}(x) =x \cdot v_i$. Now $$|h_{v_1}(x) - h_{v_2}(x)|=|x \cdot v_1-x \cdot v_2|\leq \|x\|_2 \|v_1-v_2\|_2\leq K\|v_1-v_2\|_2.$$
and hence
\begin{align}\label{eq:bottle}
\|h_{v_1} - h_{v_2}\|_\infty \leq K\|v_1-v_2\|_2.
\end{align}
The bottleneck stability theorem tells us that
$$\d(X_k(M,v_1),X_k(M,v_2))\leq N \|h_{v_1} - h_{v_2}\|_\infty.$$ Combined with 
\eqref{eq:bottle} we can conclude
 $$\d(X_k(M,v_1),X_k(M,v_2)) \leq NK \|v_1-v_2\|_2$$ and hence $v \mapsto X_k(M,v)$ is Lipshitz with respect to $\d_1$.
\end{proof}

The above lemma generalizes to all $p\geq 1$ for the family of distance metrics  \eqref{eq:dp} on $\D$. 
\begin{remark}\label{rem:p=infty}
The above lemma generalizes to all $p\geq 1$ for the family of distance metrics  \eqref{eq:dp} on $\D$. In particular, in the case $p=\infty$ the Lipschitz constant is bounded by the distance of the furtherest point in $M$ to the origin.
\end{remark}

\begin{defn}
The \emph{persistent homology transform} of $M\subset \R^d$ is the function
\begin{align*}
 \PHT(M): S^{d-1} &\to \D^{d}\\
 v&\mapsto (X_0(M,v), X_1(M,v), \ldots, X_{d-1}(M,v)).
 \end{align*}
\end{defn}

By Lemma \ref{lem:cont} we know the PHT of a finite simplicial complex is continuous. Let $C(X,Y)$ denote the space of continuous functions from $X$ to $Y$. We have shown that for $M \subset \R^d$, if $M$ has a finite simplicial complex representation, then $\PHT(M) \in C(S^d, \D^d)$.

Let $\mathcal{M}_d$ be the space of subsets of $\R^d$ that can be written as finite simplicial complexes. More precisely we can think of pairs $(K, f)$ where $K$ is a finite simplicial complex, and $f:K \to \R^d$ such that the restriction of $f$ to any simplex in $K$ is linear and the preimage under $f$ of every point in the image of $K$ is starlike. Observe that this last condition ensures that $f(K)$ is homotopy equivalent to $K$. We then define $\mathcal{M}_d$ to be the space of all pairs $(K,f)$ under the equivalence $(K_1,f_1) \sim (K_2, f_2)$ when $f_1(K_1)=f_2(K_2)$.

The main result of this paper is that the PHT is a sufficient statistic. If the PHT is injective it will be a sufficient statistic. For $\M_d$ with $d=2,3$ the transform is injective. The following fact is a composition of two of the main theoretical results in this paper, Theorem \ref{inject} and Corollary \ref{cor:inject}. 
\begin{proposition*}
The persistent homology transform is injective when the domain is $\M_d$ for $d=2,3$. 
\end{proposition*}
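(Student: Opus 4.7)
The plan is to prove injectivity constructively by describing how to recover $M \in \M_d$ from the function $\PHT(M) \in C(S^{d-1}, \D^d)$. The guiding principle is that, in each direction $v$, the persistence diagrams record the heights at which the topology of the sublevel sets $M(v)_r$ changes, and these heights are exactly the values $p \cdot v$ at vertices $p$ of $M$ (together with Morse-theoretic multiplicities distinguishable via births and deaths in each dimension). By watching how birth and death values depend on $v$, one traces out first the vertex set of $M$ and then its incidence structure.

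First I would recover the vertex set. For each vertex $p$ whose star does not span $\R^d$, the set $U_p := \{v \in S^{d-1} : p \text{ is a strict local minimum of } h_v\}$ is a nonempty open cone, and on $U_p$ the value $p \cdot v$ appears as an identifiable (for instance, smallest) birth time in $X_0(M,v)$. Because $v \mapsto p \cdot v$ is linear and $U_p$ is open, reading off this function on $U_p$ pins down $p$. Vertices whose star spans $\R^d$ are not visible in $X_0$, but, by the same token, appear in $X_k$ for some $k \geq 1$ through births or deaths of $k$-dimensional cycles at the linearly varying height $p \cdot v$, so the same tracing argument applies dimension by dimension.

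Once the vertex set is known, I would determine which higher simplices between those vertices belong to $M$. At each critical height $r = h_v(p)$ in a generic direction, the persistence diagrams record the net change in $(\dim H_0, \dots, \dim H_{d-1})$, and this change is determined by the combinatorics of the upper star of $p$ with respect to $v$: specifically, by the number of $k$-simplices having $p$ as their top vertex in direction $v$. For $d = 2$ one reads edges off from merges in $X_0$ and births in $X_1$, and filled triangles from deaths in $X_1$. For $d = 3$ one uses the extra diagram $X_2(M,v)$ analogously, with triangles detected via $X_1$ and enclosed volumes via $X_2$. Comparing these Morse data across many $v$ uniquely determines which candidate simplices between known vertices are actually present in $M$.

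The main obstacle will be non-generic directions, where several critical heights coincide or where the local star of a vertex is aligned degenerately. On these (codimension-$\geq 1$) subsets of $S^{d-1}$ the combinatorial reading of the persistence diagrams is ambiguous, so I would carry out the reconstruction on the dense open set of generic directions and use the Lipschitz continuity of Lemma \ref{lem:cont} to extend by continuity. A secondary technical difficulty is that not every vertex is visible in $X_0$, so the argument cannot be executed one dimension at a time in isolation and must couple the information from $X_0, \dots, X_{d-1}$; the passage from $d = 2$ to $d = 3$ in Corollary \ref{cor:inject} amounts to handling this coupling with the extra dimension now available.
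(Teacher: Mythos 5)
Your overall strategy---constructive reconstruction, first locating vertices by tracking birth/death coordinates that vary linearly as $p\cdot v$ over open sets of directions, then inferring the incidence structure from how the homology changes at each vertex depend on $v$---is the same as the paper's. However, the two steps you assert rather than prove are exactly where the paper has to work hardest, and as written there are genuine gaps. The first concerns visibility of vertices: you need every vertex to be critical on some open set of directions, and for vertices that are never local minima of $h_v$ you say only that they ``appear in $X_k$ for some $k\geq 1$\dots by the same token.'' That is not automatic; a priori a vertex could fail to change any Betti number in every direction. The paper proves this claim by analyzing, for each essential edge $e$ out of a vertex $x$, the number of connected components of the link of $e$ visible in the semicircle opposite to $v$, showing this count differs from $1$ on an open arc of the great circle perpendicular to $e$; the resulting jump in the relative Euler characteristic $\tilde{\chi}(x,v)$ forces $x$ to be critical on an open region of $S^2$. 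Without such an argument your vertex-finding procedure is not guaranteed to find all vertices.

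The second gap is that your reconstruction of the higher simplices is essentially a restatement of the goal. The net change in the relative Betti numbers at height $p\cdot v$ in a single direction does not determine ``the number of $k$-simplices having $p$ as their top vertex'': it only sees the homotopy type of the pair $(M(v)_{p\cdot v}, M(v)_{p\cdot v}^{-})$, and quite different local configurations produce identical Betti jumps. The paper instead recovers the link of each essential edge $e$ from the function $f_e$ recording how $\tilde{\chi}(x,\cdot)$ jumps as $v$ crosses the great circle perpendicular to $e$, via a case analysis ($0$, $1$, $2$, or $k$ link components, with a double-cone homotopy argument showing the jump equals $k-1$). Even then, the contributions of two antipodal edges can cancel along the same great circle, which is why the paper runs an ordered sweep in a fixed generic direction $v_0$, reconstructing the part of the complex below each vertex first and subtracting the already-known contribution $f_{e'}$ before attributing the remainder $f-f_{e'}$ to a new edge. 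Your proposal has no mechanism for this extraction or for the cancellation issue, so ``comparing these Morse data across many $v$ uniquely determines which candidate simplices are present'' is precisely the assertion that still needs proof. (As a minor point, the paper handles $d=2$ not by a separate planar reading of $X_0$ and $X_1$ but by embedding $\R^2$ in $\R^3$ and reducing to the three-dimensional theorem; your direct route could work but would need its own argument.)
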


Corollary \ref{sufdist} uses the above fact to prove that the PHT is a sufficient statistic for distributions on $\M_d$.
We provide in Section \ref{section:suff} a proof of the above statement. The proof is constructive and provides an algorithm that  reconstructs a
simplicial complex from the persistence diagrams that compose the PHT of the simplicial complex. Thus the proof also shows that the persistent homology transform is theoretically invertible. The PHT can be used to define a distance metric between shapes or surfaces 
\begin{equation}
\label{distance}
\d_{\mathcal{M}_d}(M_1, M_2) :=\sum_{k=0}^d \int_{S^{d-1}} \d(X_k(M_1,v),X_k(M_2, v)) dv.
\end{equation}
We can show that $\d_{\mathcal{M}_d}$ is a distance metric on $\mathcal{M}_d$.

Simplicial complexes that are homeomorphic to a sphere are a class of $\mathcal{M}_d$ of independent interest. For this class the $0$-th dimensional persistence diagrams are sufficient to characterize the simplicial complexes. The advantage of this is that the computation of the $0$-th dimensional homology persistence diagrams 
is very fast. One can use a union-find algorithm which is (almost)
linear in the number of vertices in the the simplical complex. The
following fact is discussed further in Section \ref{homeomorphic}.
\begin{proposition*}\label{prop:homeomorphicsphere}
Given a  simplicial complex $M\subset \R^3$ (respectively $\R^2$) which is homeomorphic to $S^2$ or $S^1$ (respectively $S^1$) then one can construct $X_k(M,v)$ from $X_0(M,v)$ and $X_0(M,-v)$ for $k=1,2$.
\end{proposition*}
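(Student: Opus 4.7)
The plan is to reduce the statement to a Poincaré--Lefschetz duality between ascending and descending height filtrations on the closed manifold $M\cong S^n$ (with $n=2$ in the sphere case and $n=1$ in the circle cases), which lets higher-dimensional persistence in direction $v$ be read off from zero-dimensional persistence in direction $-v$.

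First I would extract the purely topological consequences of $M\cong S^n$: $H_k(M)=0$ for $0<k<n$ and $H_n(M)=\Z_2$. The top class cannot appear in $H_n(M(v)_r)$ until $M(v)_r = M$, so $X_n(M,v)$ consists of a single essential point $(M_v,\infty)$ with $M_v:=\max_{x\in M}x\cdot v$ and has no other off-diagonal points. Since the unique essential class of $X_0(M,-v)$ is born at $\min_{x\in M} x\cdot(-v)=-M_v$, the value $M_v$ is directly readable from $X_0(M,-v)$. This immediately recovers $X_2(M,v)$ when $n=2$ and $X_1(M,v)$ when $n=1$; the remaining $X_2(M,v)$ in the circle case is empty because $M$ contains no $2$-simplices.

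The only nontrivial diagram left is $X_1(M,v)$ when $n=2$, which has no essential classes since $H_1(S^2)=0$. Its off-diagonal points, all finite, I would reconstruct from $X_0(M,-v)$ via the duality
\[
(b,d)\in X_k(M,v)_{\text{fin}} \quad\Longleftrightarrow\quad (-d,-b)\in X_{n-1-k}(M,-v)_{\text{fin}},
\]
specialised to $n=2$, $k=1$. To establish this I would use the long exact sequence of the pair $(M,M(v)_r)$ together with Lefschetz duality on the closed manifold $M\cong S^n$ and the identification of $\overline{M\setminus M(v)_r^{\circ}}$ with the descending sublevel set $M(-v)_{-r}$. Working over $\Z_2$, this yields natural isomorphisms $H_{k+1}(M,M(v)_r)\cong H_{n-k-1}(M(-v)_{-r})$ that are compatible with the filtration in $r$; after quotienting by the image of the fundamental class (which is absorbed by the essential $H_n$ already handled in the previous step), the persistence module $\{H_k(M(v)_r)\}_r$ is exchanged with $\{H_{n-1-k}(M(-v)_{-r})\}_r$ with birth and death coordinates swapped and negated. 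Alternatively, this duality is available off the shelf from the extended-persistence framework of Cohen-Steiner, Edelsbrunner and Harer for closed manifolds.

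The main obstacle is the bookkeeping of essential classes: I need to verify that the contribution of the fundamental class is absorbed cleanly into the essential $H_0$ bar of $X_0(M,-v)$ and the essential $H_n$ bar of $X_n(M,v)$, so that finite off-diagonal points of $X_k(M,v)$ biject exactly with finite off-diagonal points of $X_{n-1-k}(M,-v)$ without leaking into infinite bars on either side. Once this is verified, the reconstruction is explicit: compute $M_v$ as $-m$ where $m$ is the birth of the essential class of $X_0(M,-v)$; set $X_n(M,v)=\{(M_v,\infty)\}$; when $n=2$, obtain $X_1(M,v)$ by reflecting each finite off-diagonal point $(b,d)$ of $X_0(M,-v)$ to $(-d,-b)$; and set all remaining diagrams empty.
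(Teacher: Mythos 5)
Your proposal is correct and follows essentially the same route as the paper: the essential top-dimensional class (and, for circles, the essential $H_1$ class) is read off as the first-contact height in direction $-v$, and the finite part of $X_1(M,v)$ in the sphere case is obtained by reflecting the finite part of $X_0(M,-v)$ via a duality exchanging ascending sublevel sets for $v$ with descending ones for $-v$. The paper packages that duality as Alexander duality in $M\cong S^2$ together with persistent cohomology (which is how it handles the reversal of the induced maps that you subsume under ``compatible with the filtration''), and this is exactly the content of your Lefschetz-duality/long-exact-sequence argument after quotienting by the fundamental class.
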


The above proposition motivates the following definition of the $0$-th dimensional PHT.
\begin{defn}
The \emph{$0$-th dimensional persistent homology transform} of $M \in \R^d$ is the function 
\begin{align*}
 \PHzeroT(M): S^{d-1} &\to \D\\
 v&\mapsto X_0(M,v).
 \end{align*}
\end{defn}

We now will illustrate an example of the $0$-th dimensional persistent homology transform of a simplicial complex shaped like the letter $M$ in the plane. We have the persistence diagrams generated by height functions in 8 different directions illustrated in Figure  \ref{fig:letterM}. This is a discretization of the persistent homology transform of that particular embedding of the letter $M$ in the plane.
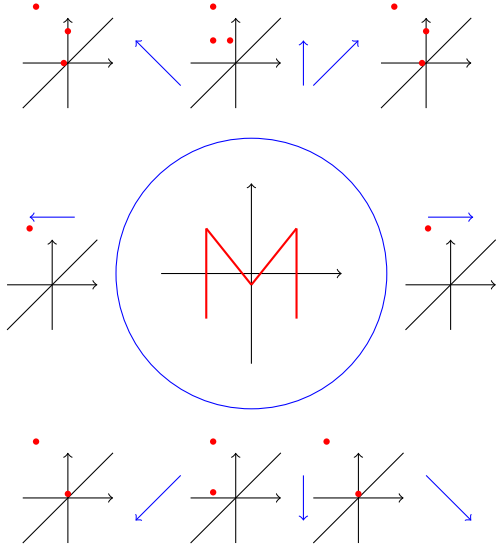
\begin{figure}

\begin{center}
\begin{tikzpicture}[scale=.3]
\draw []   (-2,-2) -- (2,2);
\draw [->] (-2, 0) -- (2,0);
\draw [->] ( 0,-2) -- (0,2);
\draw[b] [->] (5,-1) -- (3,1);
%

\fill[r] (-1.41,2.5) circle (4pt)
(-0.18, 0) circle (4pt)
(0,1.41) circle (4pt);

\end{tikzpicture}
\begin{tikzpicture}[scale=.3]
\draw []   (-2,-2) -- (2,2);
\draw [->] (-2, 0) -- (2,0);
\draw [->] ( 0,-2) -- (0,2);
\draw[b] [->] (3,-1) -- (3,1);
%

\fill[r] (-1,2.5) circle (4pt) 
(-0.25, 1) circle (4pt)
(-1,1) circle (4pt);


\end{tikzpicture}
\begin{tikzpicture}[scale=.3]
\draw []   (-2,-2) -- (2,2);
\draw [->] (-2, 0) -- (2,0);
\draw [->] ( 0,-2) -- (0,2);
\draw[b] [->] (-5,-1) -- (-3,1);
%

\fill[r] (-1.41,2.5) circle (4pt)
(-0.18,0) circle (4pt)
(0,1.41) circle (4pt);

\end{tikzpicture}
\end{center}

\begin{center}

\raisebox{-0.5\height}{
\begin{tikzpicture}[scale=.3]
\draw []   (-2,-2) -- (2,2);
\draw [->] (-2, 0) -- (2,0);
\draw [->] ( 0,-2) -- (0,2);
\draw[b] [->] (1,3) -- (-1,3);
%

\fill[r] (-1,2.5) circle (4pt);

\end{tikzpicture}}
\raisebox{-0.5\height}{
\begin{tikzpicture}[scale=.6]

\draw [->] (-2, 0) -- (2,0);
\draw [->] ( 0,-2) -- (0,2);
\draw[b] (0,0) circle(3);
\draw[thick,r] (-1,-1) -- (-1,1);
\draw[thick, r] (1,-1) -- (1,1);
\draw[thick, r](1,1)--(0,-.25);
\draw[thick, r](0,-.25)--(-1,1);

%
\end{tikzpicture}}
\raisebox{-0.5\height}{
\begin{tikzpicture}[scale=.3]
\draw []   (-2,-2) -- (2,2);
\draw [->] (-2, 0) -- (2,0);
\draw [->] ( 0,-2) -- (0,2);
\draw[b] [->] (-1,3) -- (1,3);

\fill[r] (-1,2.5) circle (4pt);

\end{tikzpicture}}
\end{center}

\begin{center}

\begin{tikzpicture}[scale=.3]
\draw []   (-2,-2) -- (2,2);
\draw [->] (-2, 0) -- (2,0);
\draw [->] ( 0,-2) -- (0,2);
\draw[b] [->] (5,1) -- (3,-1);
%

\fill[r] (-1.41,2.5) circle (4pt)
(0,0.18)circle(4pt);
\end{tikzpicture}
\begin{tikzpicture}[scale=.3]
\draw []   (-2,-2) -- (2,2);
\draw [->] (-2, 0) -- (2,0);
\draw [->] ( 0,-2) -- (0,2);
\draw[b] [->] (3,1) -- (3,-1);
%

\fill[r] (-1,2.5) circle (4pt)
(-1,0.25) circle (4pt);
\end{tikzpicture}
\begin{tikzpicture}[scale=.3]
\draw []   (-2,-2) -- (2,2);
\draw [->] (-2, 0) -- (2,0);
\draw [->] ( 0,-2) -- (0,2);
\draw[b] [->] (3,1) -- (5,-1);
%

\fill[r] (-1.41,2.5) circle (4pt)
(0,0.18) circle (4pt);
\end{tikzpicture}
\end{center}
\caption{The $0$-th dimensional persistence diagrams corresponding to filtrations of the letter $M$ by height functions of 8 different directions. The points with $\infty$ in the second coordinate are represented by a point with second coordinate just above the axes.}\label{fig:letterM}

\end{figure}

We can define distance metrics for simplicial complexes homeomorphic to the sphere. Define $\mathcal{S}(d,k)$ as the space of simplicial complexes in 
$\R^d$ homeomorphic to $S^k$. Proposition \ref{prop:homeomorphicsphere} suggests the following alternative metric for the spaces $\mathcal{S}(2,1),\mathcal{S}(3,1)$ and $\mathcal{S}(3,2)$.
\begin{align*}
\d_{\mathcal{S}(d,k)}(M_1, M_2) := \int_{S^{d-1}} \d(X_0(M_1,v), X_0(M_2, v))\,dv
\end{align*}

\section{Injectivity of the transform}\label{section:suff}

We first prove that the map from a space of well-behaved shapes
into the space of PHTs is an injective map. This injective property
will imply that the PHT is a sufficient statistic, it will also imply 
a method for the alignment of shapes. In the final subsection we 
discuss a slight variant of the PHT which we call the Euler Characteristic 
Transform (ECT) for which we can define exponential family models on
shapes and surfaces.

\subsection{Injectivity}


\begin{thm}\label{inject}
The persistent homology transform is injective when the domain is $\M_3$.
\end{thm}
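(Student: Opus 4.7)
The plan is a constructive reconstruction: from $\PHT(M)$ I would recover the image $f(K) \subset \R^3$ of $M = (K,f) \in \M_3$ in two stages, first the vertex set of a minimal triangulation of $f(K)$ and then its higher-dimensional simplices.

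First I would recover the vertex set $V$ of a minimal triangulation of $f(K)$. For each direction $v \in S^2$, the function $x \mapsto v \cdot x$ is linear on every simplex of $M$, so its critical values lie in the finite set $\{v \cdot p : p \in V\}$. These critical heights are exactly the birth and death values appearing across $X_0(M,v), X_1(M,v), X_2(M,v)$, and by Lemma~\ref{lem:cont} they vary continuously with $v$. Consequently the critical events trace out finitely many arcs on $S^2 \times \R$ of the form $v \mapsto v \cdot p$, and each such arc, being an affine-linear function of $v$, pins down a unique $p \in \R^3$. To close the loop I would verify that every vertex of the minimal triangulation is Morse-critical for at least one direction (e.g., any direction in which the vertex is a strict local minimum of $h_v$), so that reading the arcs off the PHT recovers all of $V$.

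Second, given $V$, I would decide in turn which $k$-simplices for $k=1,2,3$ belong to $M$ by probing the PHT along carefully chosen directions. For a pair $p,q \in V$, choose $v$ so that $p$ is the unique vertex at its height and $q$ is the next one up with no other vertex between them; then the event at height $v \cdot q$ in $X_0(M,v)$ is a death (the component containing $p$ absorbs $q$) iff $[p,q] \in M$, and otherwise a birth. Once the 1-skeleton is known, for a triple $p,q,r \in V$ with pairwise edges in $M$, a direction $v$ that sweeps the triangle last makes $\partial[p,q,r]$ a generator of $H_1(M(v)_r)$; this class dies at the triangle's top height iff the triangle is filled, detectable as a finite-persistence point of $X_1$ rather than as an essential class. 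The analogous test on $X_2$ determines the tetrahedra.

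The main obstacle will be Stage~1: not every vertex is Morse-critical in every direction (for instance, a vertex in the relative interior of a polyhedral face is Morse-regular for almost every $v$), so one must argue carefully that the arcs traced out by the PHT are in bijection with the vertices of the minimal triangulation of $f(K)$, neither missing a genuine corner nor spuriously introducing an extra one. A related subtlety is the codimension-one locus in $S^2$ where two vertices share a height and critical arcs collide; I would handle this by working on the generic open dense stratum and extending by continuity of the arcs. Once Stage~1 is secured, Stage~2 reduces to finitely many local case analyses and should be the easier half.
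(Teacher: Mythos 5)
Your Stage 1 is essentially the paper's vertex-finding step (track the affine arcs $v\mapsto v\cdot p$ traced by birth/death coordinates over balls of directions), but Stage 2 contains a genuine gap, and the Stage 1 obstacle you flag is real while your proposed fix does not close it. Concretely, the edge test is wrong: at height $v\cdot q$ the event in $X_0(M,v)$ records only whether $q$ attaches to \emph{some} lower part of the sublevel set and whether it \emph{merges} two previously distinct components; it cannot see which edge did the attaching. If $q$ has an edge to some vertex $r$ with $v\cdot r < v\cdot p$ lying in the same component as $p$, you observe neither a birth nor a death at $v\cdot q$ regardless of whether $[p,q]\in M$, and a death at $v\cdot q$ can equally be caused by an edge $[r,q]$ with $r\neq p$. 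So the dichotomy ``death iff $[p,q]\in M$, otherwise birth'' is false. The triangle and tetrahedron tests fail for the same reason: the cycle $\partial[p,q,r]$ can be killed in the sublevel set by $2$-cells other than $[p,q,r]$, so a finite-persistence point in $X_1$ at the triangle's top height does not certify that that particular $2$-simplex is present. On Stage 1, a vertex whose tangent cone spans all of $\R^3$ (a PL saddle) is a strict local minimum of $h_v$ for \emph{no} direction $v$, so the directions you invoke need not exist; one must show such a vertex still forces a birth or death for an open set of directions.

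The paper resolves both issues with a single local device that your proposal lacks. For each vertex $x$ it reads off from the diagrams the ranks of the relative homology groups $H_*(M(v)_{x\cdot v}, M(v)_{x\cdot v}^-)$ and tracks how the local Euler characteristic change $\tilde{\chi}(x,v)$ jumps as $v$ crosses the great circle perpendicular to an essential edge $e$ out of $x$: the jump equals the number of components of the link of $e$ visible in the far semicircle minus one. Knowing these jump functions determines the link of each edge and hence, via a sweep in a generic direction $v_0$ (needed to disentangle opposite edges whose contributions cancel), the link of each vertex; since $M$ is piecewise linear, vertices plus links determine $M$. The same analysis shows that some region of directions has $\tilde{\chi}(x,v)\neq 0$, which is exactly the completeness claim your Stage 1 needs. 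If you want to keep a simplex-by-simplex reconstruction, you would need tests of comparable locality; the global $X_0$/$X_1$ events you probe do not isolate individual simplices.
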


\begin{proof}
The proof is constructive. We state the proof as an algorithm. Given a function $\PHT(M):S^2 \to \D^3$ we state a procedure to find all the vertices in one of the simplest
representation of the simplicial complex, by simplest we mean one with the fewest possible number of vertices. We then determine the link of each vertex.
Since $M$ is assumed to be piecewise linear computing the vertices and links is enough for reconstruction.

We first provide some facts that we will use in the procedures to reconstruct $M$ from $\PHT(M)$: 
\begin{enumerate}
\item[(1)] Changes in homology of sublevel sets of height functions in any direction can only occur at the heights of vertices of $M$. 
\item[(2)] Every vertex $x$ determines a critical point for an open ball in the set of all directions (recall that the set of all directions is $S^2$). That is to say that the inclusion of this point $x$ causes a birth or a death of a homology class. The homology class is also consistent inside the ball.  We claim that there is some ball of $\{v\}\subset S^2$  with points $(a_v,b_v)$ in the corresponding diagrams that continuously change with either $a_v = x\cdot v$ or $b_v=x\cdot v$.  
\end{enumerate}
We will prove these claims later in this proof.

We first define some maps that we will use. Fix a vertex $x$ of $M$,  and a direction $v\in S^2$.
Let $M(v)_{x\cdot v}$ be the sub-level set of $h_v$ from the height $h_v(x)$
and let $M(v)_{x\cdot v}^-$ be the sub-level set of $h_v$ from the the height
$h_v(x)-\delta$ where $\delta>0$ is sufficiently small enough that no
critical values of $h_v$  occur in $(h_v(x)-\delta, h_v(x))$. The
finiteness assumption on the simplicial complex ensures that a suitable $\delta>0$ exists. By the definition of relative homology we have the following exact sequence
\begin{eqnarray*}
&\ldots \to H_{i}(M(v)_{x\cdot v}^-) \xrightarrow{\iota_*} H_{i}(M(v)_{x\cdot v}) \to
H_{i}(M(v)_{x\cdot v}, M(v)_{x\cdot v}^-) \\
&  \xrightarrow{\iota_*} H_{i-1}(M(v)_{x\cdot v}^-) \to H_{i-1}(M(v)_{x\cdot v}) \to \ldots
 \end{eqnarray*}
where $\iota_*$ is the map on homology induced by the inclusion map.
The above implies 
\begin{align}
\begin{split}
 H_i(M(v)_{x\cdot v}, M(v)_{x\cdot v}^-) &=0 \text{ for }i\geq 3\\
 H_{0}(M(v)_{x\cdot v}, M(v)_{x\cdot v}^-) & \simeq \coker \{H_{0}(M(v)_{x\cdot v}^-)\to
 H_0(M(v)_{x\cdot v})\} \\
 H_{2}(M(v)_{x\cdot v}, M(v)_{x\cdot v}^-) \simeq &\coker
 \{H_{2}(M(v)_{x\cdot v}^-)\to H_2(M(v)_{x\cdot v})\}\\
 &\oplus \ker \{H_{1}(M(v)_{x\cdot v}^-)\to H_1(M(v)_{x\cdot v})\}\\
 H_{1}(M(v)_{x\cdot v}, M(v)_{x\cdot v}^-) &  \simeq \coker
 \{H_{1}(M(v)_{x\cdot v}^-)\to 
 H_1(M(v)_{x\cdot v})\}\\
 &\oplus \ker \{H_{0}(M(v)_{x\cdot v}^-)\to H_0(M(v)_{x\cdot v})\}.
\end{split}
\label{eq:ker+cok}
\end{align}
The ranks of the above kernels and cokernels can be read from the appropriate persistence diagrams.
Let $\tilde{\beta}_i(x,v):=\rank(H_{i}(M(v)_{x\cdot v}, M(v)_{x\cdot v}^-)$ denote the relative homology Betti numbers.\footnote{Under nice circumstances (which are always true in this paper's setting, the relative homology groups $H_*(A,B)$ are the same as the reduced homology groups $\tilde{H}_*(A/B)$ where $A/B$ is the set of points in $A$ after we glue all of $B$ together into a single point. Relative homology is almost the same as normal homology except we reduce the dimension by one when looking as the $0$-th dimensional reduced homology. This implies that the $\tilde{\beta}_k(K)=\beta_k(K)$ for $k>0$ and $\tilde{\beta}_0(K)=\beta_0(K)-1$.}  We can compute these relative homology Betti numbers using \eqref{eq:ker+cok}. We have $\tilde{\beta}_i(x,v) =0$ for $i\geq 3$. We have $\tilde{\beta}_2(x,v)$ is the number of classes in $X_2(M,v)$ that are born at height $h_v(x)$ plus the number of classes in $X_1(M,v)$ that die at height $h_v(x)$.
 Similarly $\tilde{\beta}_1(x,v)$ are the number of classes in $X_1(M,v)$ that are born at height $h_v(x)$ plus the number of classes in $X_0(M,v)$ that die at height $h_v(x)$. Finally $\tilde{\beta}_0(x,v)$ is the number of classes in $X_0(M,v)$ that are born at height $h_v(x)$. We will infer the link of $x$ by considering how these ranks vary across $v\in S^2$.

We now prove claim (1); that changes in homology can only happen when a
height function reaches a vertex. This is because if $x$ is not a vertex then for a sufficiently small $\delta>0$ we have $H_k(M(v)_{x\cdot v}, M(v)_{x\cdot v}^-)=0$ for all $k$ and all directions $v$. This lack of homology is reflected in a corresponding lack of points in the persistence diagrams. 

The proof of claim (2) will become apparent later. It is clear that  if $x$ is an isolated vertex then an $H_0$ class is born at height $x_i$ in every direction so we will only need to later prove this claim for vertices that are not isolated. 



\noindent{\bf Finding vertices:} We now provide a procedure to find the vertices given the above claims.
Both coordinates (when finite) of every point in every persistence diagram must be accounted for. This is how we can guarantee all of the vertices have been found. 
We follow this algorithm repeatedly.
\begin{enumerate}
\item[(1)] Choose a direction $v$, a dimension $k$, and a point $(a_v,b_v)\in X_k(M,v)$ 
\item[(2)] The continuity of $X_k(M,v)$ as $v$ varies ensures that there is a radius $r>0$ such that there is a well defined and continuous set of points $(a_u, b_u)$ for each $u\in B(v,r)$ including the point $(a_v,b_v)$.
\item[(3)] Consider $0<r'<r$. If there exists a point $x\in \R^3$ such that $a_u=x\cdot u$ for all $u\in B(v,r')$ then $x$ is a vertex in $M$. We now have accounted for $a_u$ for $u\in B(v, r')$.
\item[(4)] Consider $0<r'<r$. If there exists a point $x\in \R^3$ such that $b_u=x\cdot u$ for all $u\in B(v,r')$ then $x$ is a vertex in $M$. We now have accounted for $b_u$ for $u\in B(v, r')$.
\end{enumerate} 
This procedure will find all the vertices in the simplicial complex. \\


\noindent{\bf Finding links:} Given the set of vertices $\{x_i\}_{i=1}^n$ we need to find the link structure for each vertex to finish the proof. Fix a vertex $x$ .The link of the vertex $x$ is $\partial B(x,r) \cap M$, for a suitable small $r>0$, and then scaled to the unit sphere. Denote the link of $x$ as $L(x)$.

If a vertex $x$ is isolated (i.e. has an empty link) then an $H_0$ class is born at height $x\cdot v$ in every direction $v$ and this point results 
in no other changes in homology. This can be read off the persistence homology transform. From now on suppose that $x$ is not isolated.

We first will wish to find the ``essential'' edges out of $x$. We can consider an edge to be essential if every simplicial representation of $M$ with vertices $\{x_i\}_{i=1}^n$ must contain that edge. For example, the sides of a rectangle are essential but the diagonals are not. For each essential edge out of $x$ we will determine in what directions perpendicular to that edge $M$ exists. From this information, using the piecewise linear structure of $M$, we can piece together the link at $x$.

From now on we will only be considering essential edges and all mention of edges will mean essential edges. We can observe that if $M$ is a simplicial complex whose vertices are in general position then every edge is essential.

Recall that $\tilde{\beta}_k(x,v) = \rank(H_{k}(M(v)_{x\cdot v}, M(v)_{x\cdot v}^-))$. Let $$\tilde{\chi}(x,v):=\tilde{\beta}_0(x,v)-\tilde{\beta}_1(x,v) + \tilde{\beta}_2(x,v).$$ 
This is the change in the Euler characteristic from $M(v)_{x\cdot v}^-$ to $M(v)_{x\cdot v}$.
Suppose that $e$ is an edge out of $x$. Without loss of generality we orient $S^2$ (the range of directions in which we consider the corresponding height functions) to have $e$ pointing to the north pole. If $e$ is isolated (i.e. its link is empty) then it contributes a path from $x$ to $M(v)_{x\cdot v}^-$ whenever $v$ is in the southern hemisphere which is not available for any direction in the northern hemisphere. This means that there is a contribution of increasing $\tilde{\beta}_1(x,v) - \tilde{\beta}_0(x,v)$ (and hence decreasing $\tilde{\chi}(x,v)$)  by $1$ as $v$ passes southwards across the equator. This contribution is illustrated in Figure \ref{fig:isolatededge}.

\begin{center}
\begin{figure}[hbt]
\begin{subfigure}{0.5\textwidth}\centering
\begin{tikzpicture}
\draw[fill=lgray, lgray] (-0.25, -1)--(0.5,2)--(0.75,2)--(0, -1);
\draw[thick] (0,0)--(0,2);
\fill  (0,0) node [below left] {$x$} circle (2pt);
\node at (-0.5,1)[above right] {$e$};
\draw (0,0) ellipse (2cm and 0.4cm);
\draw (-0.25, -1)--(0.5,2);
\draw (0, -1)--(0.75,2);
\draw[->] (0,0)--(-2, 0.5) node [above]{$v$};

\end{tikzpicture}
\end{subfigure}
\begin{subfigure}{0.5\textwidth}\centering
\begin{tikzpicture}
\draw[fill=lgray, lgray] (0.25, -1)--(-0.5,2)--(-0.25,2)--(0.5, -1);
\draw[thick] (0,0)--(0,2);
\fill  (0,0) node [below left] {$x$} circle (2pt);
\node at (0.5,1)[above left] {$e$};
\draw (0,0) ellipse (2cm and 0.4cm);
\draw (0.25, -1)--(-0.5,2);
\draw (0.5, -1)--(-0.25,2);
\draw[->] (0,0)--(-2, -0.5) node [above]{$v$};
\end{tikzpicture}
\end{subfigure}
\caption{As $v$ changes from the northern to the southern hemisphere the edge $e$ is becomes included in $(M(v)_{x\cdot v},M(v)_{x\cdot v}^-)$, which here is indicated by the gray shaded region. The edge $e$ acts as an extra path from $x$ to $M(v)_{x\cdot v}^-$ which increases $\tilde{\beta}_1(x,v) - \tilde{\beta}_0(x,v)$. }\label{fig:isolatededge}
\end{figure}
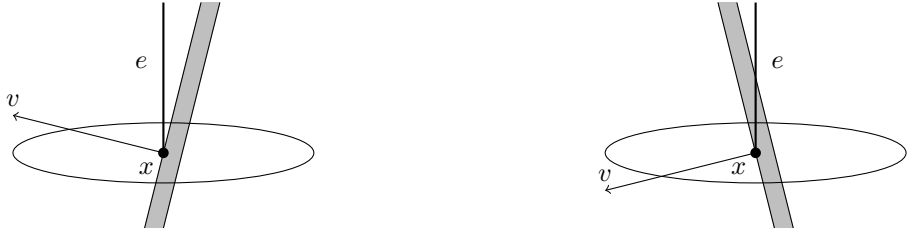
\end{center}
Suppose now that the edge $e$ is not isolated. We need to further consider its link. More precisely we will consider the range of directions perpendicular to $e$ within $M$. Consider the great circle perpendicular to $e$ which is now the equator due to our orientation. Project onto the ball the directions that emanate perpendicularly from $e$. Taking a bird's eye view we can split the equator into regions depending on how many components are in this projection of the link of $e$ intersected with the other half of this equator. This is illustrated in Figure \ref{fig:birdeye}. 
\begin{center}
\begin{figure} [hbt]
\begin{minipage}{0.3\textwidth}\centering
\begin{tikzpicture}
\draw (0,0)--(0,2);
\draw (0,0)--(0.5,1.2)--(0,2);
\node at (.2,-.2) {$x$};
\draw[thick,->] (1,0.5)--(0,0.8);
\draw[fill=gray] (0,0)--(0,2)--(0.5,1.2)--(0,0);
\draw (0,0)--(-0.8,1)--(0,2);
\draw (0,0)-- (-1.5,-1.8)--(0,2);
\draw (-0.8,1)--(-1.5,-1.8);
\draw[fill=gray] (0,0)-- (-1.5,-1.8)--(0,2)--(0,0);
\draw[fill=gray]  (-0.8,1)--(-1.5,-1.8)--(0,2)--(-0.8,1);
\node at (1,0.5)[right] {$e$};
\draw[->] (1,0.5)--(0,0.8);
\draw (0,0) ellipse (2cm and 1cm);
\draw (0,0)--(-0.8,1);
\end{tikzpicture}
\end{minipage}
\begin{minipage}{0.3\textwidth}\centering
\begin{tikzpicture}
\draw (0,0) circle (1.5);
\draw (0,0)-- (285:1.5cm);

\filldraw[fill=gray] (0,0) -- (130:1.5cm) arc (130:250:1.5cm) -- (0,0);
\end{tikzpicture}
\end{minipage}
\begin{minipage}{0.3\textwidth}\centering
\begin{tikzpicture}
\draw (0,0) circle (1.5);
\draw[dashed] (0,0)-- (285:1.5cm);
\draw (0,0)-- (195:2cm);
\draw (0,0)-- (15:2cm);
\draw (0,0)-- (220:2cm);
\draw (0,0)-- (160:2cm);
\draw[dashed] (0,0)-- (250:1.5cm);
\draw[ultra thick] (130:1.5cm) arc (130:250:1.5cm);
\fill (285:1.5cm) circle (2pt);
\draw[dashed] (0,0)-- (130:1.5cm);
\fill[opacity=0.5,fill=gray] (0,0) -- (130:1.5cm) arc (130:250:1.5cm) -- (0,0);
\node at (177.5:1.75cm) {$1$};
\node at (207.5:1.75cm) {$0$};
\node at (297.5:1.75cm) {$1$};
\node at (87.5:1.75cm) {$2$};
\end{tikzpicture}
\end{minipage}
\caption{The simplicial complex near $e$, then taking a bird's eye view down $e$, and finally the number of components appearing in the alternate semicircle to that vector over regions of the circle.}\label{fig:birdeye}
\end{figure}
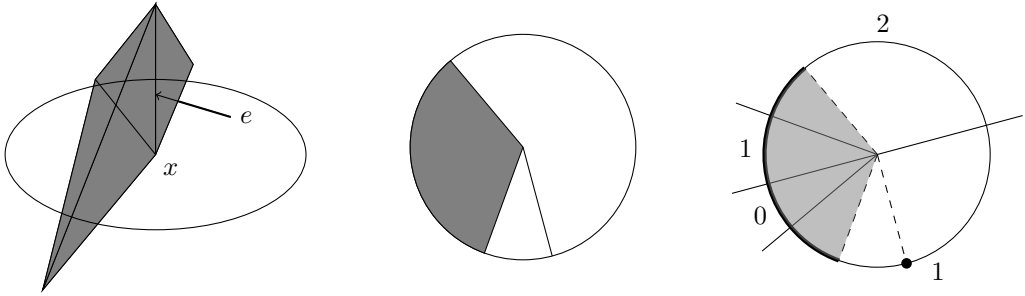
\end{center}
We are interested in how the $\tilde{\chi}(x,v)$ changes as $v$ passes the equator traveling south. There are the following possibilities.
\begin{itemize}
\item[(0)] If there are no components then a new set of paths from $x$ to $M(v)_{x\cdot v}^-$ is born and the $\tilde{\beta}_1(x,v)-\tilde{\beta}_0(x,v)$ is increased by $1$ as $v$ passes southwards. (This is comparable to the isolated edge case as we do not ``see" any part of the link of $e$ and is illustrated in Figure \ref{fig:isolatededge}.)
\item[(1)] If there is one component then there is no change to any of the $\tilde{\beta}_i(x,v)$ as $v$ passes southwards. 
\item[(2)] If there are $2$, then $\tilde{\beta}_2(x,v)-\tilde{\beta}_1(x,v)$ is increased by $1$ as $v$ passes southwards. Either $\tilde{\beta}_2(x,v)$ is increased as two already connected components (connected outside of the link of the edge) join or $\tilde{\beta}_1(x,v)$ is decreased as two formerly unconnected components. Examples of this are illustrated in Figures \ref{fig:2components1} and  \ref{fig:2components2}.
\item[($k$)] If there are $k$ components, $k>1$, then $\tilde{\beta}_2(x,v)-\tilde{\beta}_1(x,v)$ is increased by $k-1$ as $v$ passes southwards. The idea is a generalization of the $2$ component case. We can construct a graph $G$ with one vertex for each of the connected components in Figure \ref{fig:birdeye}. We add edges between these ``connected component" vertices when there is some face in $M$ (with $x$ in its boundary) between them. When $v$ lies below the equator $M(v)_{x\cdot v}/M(v)_{x\cdot v}^-$ is homotopy equivalent to double cone on $G$. For $v$ lying above the equator, we create a different graph $G'$ which is the same as $G$ but we add a vertex representing the edge $e$ and also add $k$ edges one for each of the connected components. These edges go from the vertex representing $e$ to the vertex representing the corresponding connected component. For $v$ lying below the equator, $M(v)_{x\cdot v}/M(v)_{x\cdot v}^-$ is homotopy equivalent to the double cone on $G'$. As $v$ passes southwards we effectively glue one edge and $k$ discs to get from the double cone on $G$ to the double cone on $G'$. This increases $\tilde{\beta}_2(x,v)-\tilde{\beta}_1(x,v)$ is by $k-1$. For example, in the case of $k=2$, we have to possible graphs for $G$; either two disconnected vertices (as shown in Figure \ref{fig:2components1}) or two vertices connected by one edge (as shown in Figure \ref{fig:2components2}). 
\end{itemize}
\begin{center}
\begin{figure}[hbt]
\begin{subfigure}{0.5\textwidth}\centering
\begin{tikzpicture}
\draw[pattern=crosshatch] (-0.25, -1)--(0.5,2)--(0.75,2)--(0, -1);
\draw[fill=lgray] (0,0)--(0,2)--(1,1.5)--(0,0);
\draw[fill=lgray] (0,0)--(0,2)--(0.5,0.5)--(0,0);
\draw[pattern=crosshatch] (-0.25, -1)--(0.5,2)--(0.75,2)--(0, -1);
\draw[thick] (0,0)--(0,2);
\fill  (0,0) node [below left] {$x$} circle (2pt);
\draw (0,0) ellipse (2cm and 0.4cm);
\draw (-0.25, -1)--(0.5,2);
\draw (0, -1)--(0.75,2);
\draw[->] (0,0)--(-2, 0.5) node [above]{$v$};

\end{tikzpicture}\caption{ }
\end{subfigure}
\begin{subfigure}{0.5\textwidth}\centering
\begin{tikzpicture}
\draw[pattern=crosshatch] (0.25, -1)--(-0.5,2)--(-0.25,2)--(0.5, -1);
\draw[fill=lgray] (0,0)--(0,2)--(1,1.5)--(0,0);
\draw[fill=lgray] (0,0)--(0,2)--(0.5,0.5)--(0,0);
\draw[pattern=crosshatch] (0.25, -1)--(-0.5,2)--(-0.25,2)--(0.5, -1);
\draw[thick] (0,0)--(0,2);
\fill  (0,0) node [below left] {$x$} circle (2pt);
\draw (0,0) ellipse (2cm and 0.4cm);

\draw[->] (0,0)--(-2, -0.5) node [above]{$v$};

\end{tikzpicture}\caption{ }
\end{subfigure}\caption{In (a), when $v$ is above the equator, $M(v)_{x\cdot v}/M(v)_{x\cdot v}^-$ is homotopy equivalent to circle. The addition of $x$ into the sublevel sets  creates a loop. In (b), when $v$ is below the equator, $M(v)_{x\cdot v}/M(v)_{x\cdot v}^-$  is homotopy equivalent to a point. The inclusion of $x$ is a continuous variation and hence does not change the homology. As $v$ passes the equator southwards we switch between these cases and $\tilde{\beta}_1(x,v)$ decreases by one.}\label{fig:2components1}
\end{figure}
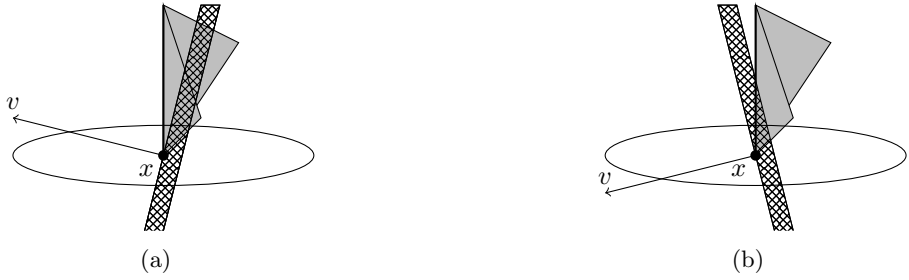
\end{center}
\begin{center}
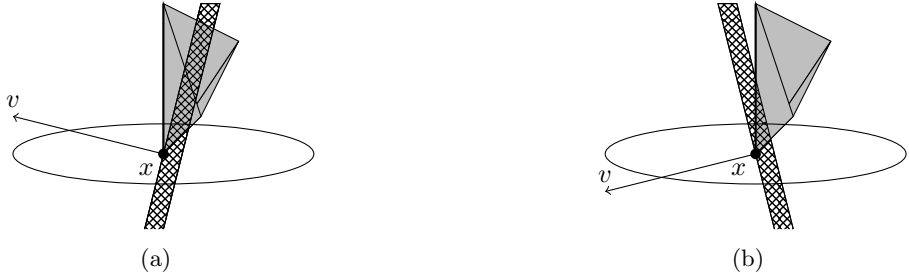
\begin{figure}[hbt]
\begin{subfigure}{0.5\textwidth}\centering
\begin{tikzpicture}
\draw[pattern=crosshatch] (-0.25, -1)--(0.5,2)--(0.75,2)--(0, -1);
\draw[fill=lgray] (0,0)--(0.5,.5)--(1,1.5)--(0,0);
\draw[fill=lgray] (0,0)--(0,2)--(1,1.5)--(0,0);
\draw[fill=lgray] (0,0)--(0,2)--(0.5,0.5)--(0,0);
\draw[pattern=crosshatch] (-0.25, -1)--(0.5,2)--(0.75,2)--(0, -1);
\draw[thick] (0,0)--(0,2);
\fill  (0,0) node [below left] {$x$} circle (2pt);
\draw (0,0) ellipse (2cm and 0.4cm);
\draw (-0.25, -1)--(0.5,2);
\draw (0, -1)--(0.75,2);
\draw[->] (0,0)--(-2, 0.5) node [above]{$v$};

\end{tikzpicture}\caption{}
\end{subfigure}
\begin{subfigure}{0.5\textwidth}\centering
\begin{tikzpicture}
\draw[pattern=crosshatch] (0.25, -1)--(-0.5,2)--(-0.25,2)--(0.5, -1);
\draw[fill=lgray] (0,0)--(0,2)--(1,1.5)--(0,0);
\draw[fill=lgray] (0,0)--(0.5,.5)--(1,1.5)--(0,0);
\draw[fill=lgray] (0,0)--(0,2)--(0.5,0.5)--(0,0);
\draw[pattern=crosshatch] (0.25, -1)--(-0.5,2)--(-0.25,2)--(0.5, -1);
\draw[thick] (0,0)--(0,2);
\fill  (0,0) node [below left] {$x$} circle (2pt);
\draw (0,0) ellipse (2cm and 0.4cm);

\draw[->] (0,0)--(-2, -0.5) node [above]{$v$};

\end{tikzpicture}\caption{ }
\end{subfigure}\caption{In (a), when $v$ is above the equator, $M(v)_{x\cdot v}/M(v)_{x\cdot v}^-$ is homotopy equivalent to point. The addition of $x$ into the sublevel sets is a continuous variation and hence does not change the homology. In (b), when $v$ is below the equator, $M(v)_{x\cdot v}/M(v)_{x\cdot v}^-$ is homotopy equivalent to a 2-sphere. The inclusion of $x$ into the sublevel sets completes the outside of a void. As $v$ passes the equator southwards we switch between these cases and  $\tilde{\beta}_2(x,v)$ increases by one.}\label{fig:2components2}
\end{figure}
\end{center}
Together we see that the link at $e$ causes $\tilde{\chi}(x,v)$ to increase by $k-1$ if the link of $e$ intersected with the semicircle furtherest from $v$ contains $k$ components. For each edge $e$, let $f_e$ denote the function on the great circle perpendicular to $e$ of the changes in $\tilde{\chi}(v,x)$ as we pass ``southwards'' over the equator away from $e$.  Knowing the function $f_e$ is equivalent to knowing, at each location, how many components lie in the alternate semicircle. This in turn is equivalent to knowing the birds eye picture as illustrated in Figure \ref{fig:birdeye} which is in turn equivalent to knowing the link of $e$.

We cannot make any comments about what happens on the equator itself. However there are only finitely many vertices and so there are only finitely many edges. In turn this implies that the set of directions perpendicular to an edge at $x$ is of measure zero. From now on we only consider functions up to sets of measure zero.

The sphere of directions can be partitioned into regions bounded by finitely many great circles perpendicular to edges emanating from $x$. Within the same region the $\tilde{\chi}$ remains constant. The above process shows how $\tilde{\chi}$ varies as it passes between regions.  Consider an edge $e$. From our assumption that we are considering an essential edge we know that the number of components is not $1$ for some open interval along the great circle perpendicular to $e$. This implies that at least one region bounded by great circles has non-zero $\tilde{\chi}$ and hence $x$ determines a critical point for directions in that region. This proves claim (2).

We now describe how we scan all the vertices and find their links. \\
\noindent{(1)} Select a direction $v_0$ for which no vertices have the same height in that direction. We will iterate the following procedure through all the vertices 
in order of their height in the direction of $v_0$. This is possible as we are only considering simplicial complexes with finitely many vertices. We will scan through  the vertices in this direction, building up the complex as new vertices are included. It is useful that at each stage, when we want to find the link of a new vertex, that we already know its neighborhood intersected with the half plane in the direction $-v_0$.  For the base case, we know that we first hit the simplicial complex at some finite time, due to the finiteness condition. The neighborhood of this first vertex $x$ intersected with $M(x,v_0)$ is only the point $x$ itself. \\
\noindent{(2)} We now investigate vertex $x$. We know the sublevel set $M(x,v_0)$.  Consider the partition of the sphere around $x$ into regions with the same relative homology. There is a possibility that there are edges  $e$ and $e'$ out of $x$ directly opposite to each other with links such that the effects of the these links passing over the equator cancel. We can remedy this situation by  including in our list of great circles those perpendicular to edges next to $x$ lying in $M(x,v_0)$. This partition tells us which great circles  corresponding to edges exist. \\
\noindent{(3)}  Consider a great circle $C$ found in (2). It has perpendicular normals $\eta$ and $-\eta$ with $v_0 \cdot \eta <0$. From $v_0\cdot \eta <0$ we know that if there was an edge in the direction of $\eta$ then it would be in $M(x,v_0)$. Furthermore we would know its link. \\
\indent{(3a)} If there is no edge in the direction of $\eta$ then there must be an edge $e$ in the direction of $-\eta$. We also know that $f_e=-f$. The minus sign is because of switching the orientation so that $e$ is pointing the to north pole instead of the south. Sine we know $f_e$ we can determine the link of $e$.\\
\indent{(3b)} If there is an edge $e'$ in the direction of $\eta$ consider the new function $g=f-f_{e'}$. If $g$ is the zero function (recall everything is up to sets of measure zero) then every change of $\tilde{\chi}$ as vector pass the great circle $C$ can be attributable to $e'$ and hence there is no edge in the direction $-\eta$. If $g$ is not the zero function then there is an edge $e$ in the direction of $-\eta$. As in the case of (3a) we know that $f_e=-g$ and we can thus determine the link of $e$.

\noindent{(4)} For each vertex, in the order outline in (1), we first find the appropriate great circles by step (2). We then iterate step (3) through all great circles.  Remember each iteration will assign changes to vertices and/or links and at the next iteration we ignore previously labeled vertices and links. When we have assigned all the changes we will have revealed 
the entire simplicial complex, all the links and vertices.

\end{proof}

Although it is possible to write a direct proof for the injectivity of the persistent homology transform for simplicial complexes in the the plane it is easier and faster to consider it a corollary of the three dimensional case.

\begin{corollary}
The persistent homology transform is injective when the domain is $\M_2$.
\end{corollary}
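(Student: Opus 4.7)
The plan is to deduce the two-dimensional case from Theorem~\ref{inject} by embedding $\M_2$ into $\M_3$. Given $M \in \M_2$, let $\iota\colon \R^2 \hookrightarrow \R^3$ be the inclusion $(x,y)\mapsto(x,y,0)$ and set $\tilde M = \iota(M) \in \M_3$. Since $\iota$ is a bijection onto its image, recovering $\tilde M$ recovers $M$. So it suffices to show that $\PHT(M)\colon S^1\to\D^2$ determines $\PHT(\tilde M)\colon S^2\to\D^3$; then Theorem~\ref{inject} supplies $\tilde M$ and hence $M$.

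First I would check that the directional filtrations in $\R^3$ reduce to those in $\R^2$. For any $v=(v_1,v_2,v_3)\in S^2$ with $(v_1,v_2)\neq 0$, set $c(v):=\|(v_1,v_2)\|_2$ and $u(v):=(v_1,v_2)/c(v)\in S^1$. For every $x\in M$ we have $\iota(x)\cdot v = c(v)\,(x\cdot u(v))$, so the filtration $\tilde M(v)$ is obtained from $M(u(v))$ by rescaling the height parameter by $c(v)$. Thus $X_k(\tilde M,v)$ is obtained from $X_k(M,u(v))$ by the linear map $(a,b)\mapsto (c(v)a,c(v)b)$ on birth–death coordinates, for $k=0,1$. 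Since $\tilde M$ is $2$-dimensional, $X_2(\tilde M,v)$ has no off-diagonal points, so the full triple $(X_0(\tilde M,v),X_1(\tilde M,v),X_2(\tilde M,v))$ is recovered from $\PHT(M)$.

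The only remaining directions are the two poles $v=(0,0,\pm 1)$, where the height function on $\tilde M$ is constant. These give degenerate persistence diagrams (one essential class in $H_0$ for each connected component and one essential class in $H_1$ for each $1$-cycle of $\tilde M$, all born at $0$ and never dying). By Lemma~\ref{lem:cont} the map $v\mapsto X_k(\tilde M,v)$ is continuous, so these polar diagrams are in any case determined as the continuous limits from the non-polar directions already reconstructed above. Consequently the whole function $\PHT(\tilde M)$ is uniquely determined by $\PHT(M)$.

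Finally, applying Theorem~\ref{inject} to $\tilde M\in\M_3$ recovers the simplicial complex $\tilde M$, and pulling back along $\iota^{-1}$ gives $M$. The main obstacle is really just the bookkeeping in the middle step: making sure the rescaling relation between height functions carries over cleanly to persistence diagrams and accounting for the (measure-zero) set of degenerate directions. Once that is handled, injectivity in $\M_2$ is an immediate corollary of injectivity in $\M_3$.
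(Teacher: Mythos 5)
Your proof is correct and follows essentially the same route as the paper: embed $\M_2$ into $\M_3$ via the zero third coordinate, observe that the sublevel-set filtration for $v=(v_1,v_2,v_3)$ coincides with that for the normalized planar direction up to rescaling the height by $\|(v_1,v_2)\|$, handle the poles $(0,0,\pm 1)$ separately, and invoke Theorem~\ref{inject}. The only cosmetic differences are that you determine the polar diagrams by continuity where the paper computes them directly, and your observation that $X_2(\tilde M,v)$ is empty is the (correct) version of what the paper states as ``$X_3(M,v)$ is the empty diagram.''
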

\begin{proof}
Let us consider $\R^2$ as being inside $\R^3$ with the third coordinate set to zero. This means that we can think of $\M_2$ as lying inside $\M_3$. 
Consider $M \in \M_2$ as a simplicial complexes in $\R^3$. 
Let $v=(v_1, v_2, v_3)$ be a unit vector in $\R^3$ with $v_3 \neq \pm 1$. Let $(\tilde{v}_1, \tilde{v}_2)$ be the unit vector in the direction of $(v_1, v_2)$. We have $\tilde{v}_i \| (v_1, v_2) \|=v_i$.
Now $M((v_1, v_2, v_3))_r$ is the set of all points $(x,y,z)$ in $M$ (viewed as a subset of $\R^3$) such that $(x,y,z)\cdot(v_1, v_2,v_3)\leq r$. Now   $(x,y,z)\in M$ implies that $z=0$ and hence  $M((v_1, v_2, v_3))_r$ is the set of $(x,y)$ in $M$ (viewed as a subset of $\R^2$) such that $(x,y) \cdot (v_1, v_2) \leq r$. 
Now $(x,y) \cdot (v_1, v_2) \leq r$ is equivalent to $(x,y) \cdot (\tilde{v}_1, \tilde{v}_2) \leq r/\|(v_1, v_2)\|$ so we can see that $M((v_1,v_2, v_3))_r$ is in fact $M((\tilde{v}_1, \tilde{v}_2))_{ r/\|(v_1, v_2)\|}$.
This implies that we can construct $X_p(M, (v_1, v_2, v_3))$ from $X_p(M, (\tilde{v_1}, \tilde{v_2}))$ by appropriately scaling the points in the persistence diagram.

If $v=(0,0,\pm1)$ then $M(v)_r$ is empty for $r<0$ and is $M$ for $r\geq 0$. This tells us that the persistence diagrams simply contain a set of points at $(0, \infty)$ to represent the homology of $M$.

Finally note that $X_3(M,v)$ is the empty diagram for all directions $v \in S^2$.

Let $M_1, M_2 \in \M_2$. If $\PHT(M_1) : S^1 \to \D^2$ is the same as $\PHT(M_2): S^1 \to \D^2$ then our above construction process shows that $\PHT(M_1): S^2 \to \D^3$ is the same as $\PHT(M_2):S^2 \to \D^3$ when both $M_1$ and $M_2$ are embedded in $\R^3$ by setting the third coordinate to be zero. Now the persistent homology transform is injective on $\M_3$ by Theorem \ref{inject}. This implies that $M_1=M_2$.
\end{proof}

A result of the above theorem and corollary is that we can model the space of piece-wise linear simplicial complexes in $\R^3$ (or $\R^2$) by modeling the images of their persistent homology transforms which lie inside $C(S^2,\D^3)$ (or $C(S^1, \D^2)$ respectively).
We can define distances between two shapes $M$ and $M'$ by the distance between $\PHT(M)$ and $\PHT(M')$ -- we can pull back the metric on the space 
of diagrams to a metric on the space of piece-wise linear objects in $\R^3$ (or $\R^2$). We can also specify a likelihood over shapes, which is difficult, by  defining
a likelihood of a collection of points. We can use point processes for a likelihood model over PHT space.


We now use the above result to prove sufficiency of the PHT. 
\begin{cor}\label{sufdist}
Consider the subspace of shapes $\mathcal{M}_k^N$ (for  $k=2$ or $3$), piecewise linear simplicial complexes with at most $N$ vertices. Let $f(x;\theta)$ be a density function over $\M_k$ with parameters $\theta \in \Theta$ and $x \in \mathcal{M}_k$ whose support is contained in some $\M_k^N$. Then the persistence homology transform $t(X) \in  C(S^{k-1},\D^k)$ is a sufficient statistic.
\end{cor}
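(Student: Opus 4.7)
The plan is to apply the Fisher--Neyman factorization theorem, using injectivity of the PHT as the essential ingredient. Recall the factorization theorem says that a statistic $T$ is sufficient for a parametric family $\{f(\,\cdot\,;\theta)\}_{\theta\in\Theta}$ on a sample space $\mathcal{X}$ if and only if there exist measurable functions $g$ (depending on $\theta$) and $h$ (not depending on $\theta$) with $f(x;\theta)=g(T(x);\theta)\,h(x)$.

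First I would set $T=\PHT\colon \M_k^N\to C(S^{k-1},\D^k)$. By Theorem \ref{inject} (for $k=3$) and its corollary (for $k=2$), $T$ is injective on $\M_k$, and hence in particular on $\M_k^N$. Therefore $T$ admits a well-defined left inverse $T^{-1}\colon T(\M_k^N)\to \M_k^N$ satisfying $T^{-1}(T(x))=x$ for every $x\in\M_k^N$.

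Next I would define
\begin{equation*}
g(t;\theta)\;=\;\begin{cases} f\bigl(T^{-1}(t);\theta\bigr) & \text{if } t\in T(\M_k^N),\\[2pt] 0 & \text{otherwise,}\end{cases}
\qquad h(x)\;=\;1.
\end{equation*}
Since the support of $f(\,\cdot\,;\theta)$ is contained in $\M_k^N$ and $T$ is injective there, for any $x\in \M_k^N$ we have $g(T(x);\theta)\,h(x)=f(T^{-1}(T(x));\theta)=f(x;\theta)$, and for $x$ outside the support both sides vanish. This is the required factorization, so $T=\PHT$ is a sufficient statistic by Fisher--Neyman.

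The only mildly subtle point — and the step I would flag as the main obstacle — is verifying measurability so that the factorization theorem is strictly applicable in the measure-theoretic sense. This requires fixing a reference measure on $\M_k^N$ and a compatible $\sigma$-algebra on $C(S^{k-1},\D^k)$ (for instance the Borel $\sigma$-algebra generated by the Wasserstein-type metric $\d_{\M_k}$ from equation \eqref{distance}), and checking that $T$ and $T^{-1}$ are measurable with respect to them. Lemma \ref{lem:cont} provides continuity of $T$, which gives Borel measurability for free; measurability of $T^{-1}$ on its image follows from the constructive nature of the proof of Theorem \ref{inject}, which expresses the vertices and links of $M$ as functions read off from the persistence diagrams. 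Once this bookkeeping is in place, the argument above completes the proof.
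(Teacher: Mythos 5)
Your proposal is correct and follows essentially the same route as the paper: injectivity of the PHT gives a left inverse $\ell$, Fisher--Neyman is applied with $h\equiv 1$ and $g_\theta = f_\theta\circ\ell$, and the remaining work is the measurability bookkeeping, which the paper handles by putting explicit metrics on $\M_k^N$ and on the image of the transform and invoking the fact that an injective Borel-continuous map has Borel-measurable inverse. The only quibble is your citation of Lemma \ref{lem:cont}, which gives continuity in the direction $v$ for a fixed shape rather than continuity of $M\mapsto\PHT(M)$; the paper instead asserts Borel continuity of the PHT with respect to the distance $d'$ it constructs on $\M_k^N$.
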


\begin{proof}
Denote the subset $C(S^{k-1}, \D^k)$ that is realizable by the PHT applied to $\mathcal{M}_k^N$ as $t \in {\cal T}$. \\

We first state the Fisher-Neyman factorization theorem \cite{Neyman35}. Given a joint density function $f(x;\theta), \, \theta \in \Theta$ then a statistic $T = T(X)$ is sufficient for 
$\theta$ if and only if 
$$f(x;\theta) = g(t(x), \theta) \, h(x),$$
where $g(\cdot)$ and $h(\cdot)$ are functions. A more rigorous version
of the above result with respect to measure theory was given by Halmos
and Savage \cite{HalmosSavage49}. This version of the theorem follows:
A necessary and sufficient condition that a statistic $T$ be
sufficient for a dominated set of measures $\mathscr{K}$($ \ll \lambda$) on $\mathcal{M}_k$  is that for every $\mu \in \mathscr{K}$ 
the density $f_\mu = \frac{d\mu}{d\lambda_0}$ can be factorized as
$$f_\mu(x) = g_\mu(T(x)) \cdot h(x)$$
and that $g_\mu$ is a $\cal T$ measurable function and $h(x) \neq 0$ is a $\mathcal{M}_k$ measurable function. We include this version of stating sufficiency via factorization to allay measure theoretic concerns about the Fisher-Neyman version.

From the injectivity statement in Theorem \ref{inject} we know there exist functions 
$$h:  \mathcal{M}_k \to {\cal T}, \mbox{ and  } \ell : {\cal T} \to \mathcal{M}_k.$$
We use the notation
$$g(t(x),\theta) \equiv g_\theta(t(x)), \mbox{ and  } f(x;\theta) \equiv f_\theta(x).$$
The following relations show that the condition of the Fisher-Neyman factorization theorem holds for the PHT
\begin{eqnarray*}
f_\theta(x) & = & f_\theta(\ell(t(x))), \\
                    & = &f_\theta \circ \ell(t(x)),\\
                    & = & g_\theta(t(x)),
\end{eqnarray*}
where $g_\theta =  f_\theta \circ \ell.$

We now want to verify that all the relevant functions are measurable. In our case the function $h(x)$ will be constant and thus it is automatically measurable. In order to show that $g_\theta$  is ${\cal T}$ measurable we observe that $g_\theta=f \circ \ell$. Since by assumption $f$ must be measurable it will be sufficient to show $\ell:\mathcal{T} \to \M_k^N$ is measurable. We use the Borel sigma algebras associated with certain distance functions on $\M_k^N$ and $\cal T$. On $\mathcal{T}$, consider the $L_1$ function distance in $C(S^{d-1}, \D^d)$, using the $L_1$ distance in $\D$.

There are only finitely many different possible simplicial complexes on $N$ labelled vertices. Given a simplicial complex $K$ on $N$ labelled vertices, the possible maps $f:K \to \R^d$ such  that the restriction of $f$ to each simplex is linear is determined by the locations of the $N$ vertices in $\R^d$. This implies that the space of possible maps lives in $\R^{d\times N}$. The subset of maps where the preimage under $f$  of every point in $f(K)$ is starlike is an open subset. There is a natural distance for this subset in  $\R^{d\times N}$ inherited from Eucildean distance - denote this distance by $d_K(f_1, f_2)$. Define the distance function $d'$ over $\M_d$ as follows. Let $M_1, M_2 \in \M_d$ and with slight abuse of notation let $M_i$ also denote the image in $\R^d$ which determines the equivalence class of $M_i$. Each $M_i$ is represented by many different pairs $(K,f)$.  
Set
$$d'(M_1, M_2):=\inf\{ d_K( f_1, f_2): f_1(K)=M_1 \text{ and } f_2(K)=M_2\}.$$
Observe that if $M_1$ and $M_2$ are not homotopy equivalent then the distance between them is infinite as no $K$ exists such that $f_1(K)=M_1$ and $f_2(K)=M_2$.

The PHT is Borel continuous with respect to the these distance functions. Since the inverse of an injective Borel continuous map is Borel continuous we can conclude that $\ell$ is Borel measurable. 
\end{proof}

\subsection{Exponential family models and Euler characteristics}
\label{expfamily}

In statistical modeling the relevance of a sufficient statistic is often through the existence of an exponential family model. An exponential family can be defined as a collection of probability densities with a $d$ dimensional sufficient statistic $T(z) = (T_1(x),...,T_d(x))^T$
such that
$$p_\theta(x) = a(\theta) \, h(x) \, \exp(- \langle \theta, T(x) \rangle),$$
with $\langle \cdot, \cdot \rangle$ as the standard $d$-dimensional inner product. This allows for a likelihood function for observations
of surfaces with the likelihood of the data, $\mbox{Data} \equiv (X_1,...,X_n) \stackrel{iid}{\sim} p_\theta$, stated as
$$\mbox{Lik}(\mbox{Data} \mid \theta ) = \prod_{i=1}^n \Big[  a(\theta) \, h(x_i) \, \exp(- \langle \theta, T(x_i) \rangle ) \Big],$$
where the parameters are associated with the sufficient statistics. Formulating an exponential family model using sufficient statistics that are collections of persistence diagrams is problematic. The complex geometry of the space of persistence diagrams \cite{Turneretal13} is not conducive to a Euclidean inner product structure.

There is a variation of the PHT that is an injective map and has a simple inner product structure. Given the previous height function $$M(v)_r = \{ \Delta\in M: x\cdot v \leq r \text{ for all } x\in \Delta\}$$ 
the Euler characteristic curve $\chi(M,v)$ is the following function of the Euler characteristic
for the subcomplex at values $r$, $ \chi(M,v)(r) = \chi(M(v)_r)$. The Euler characteristic of a subcomplex which in our case is a
a surface of a polyhedra has a simple form 
$$ \chi(M(v)_r) = V - E + F,$$
where $V,E,F$ are the number of vertices, edges, and faces respectively of the subcomplex $M(v)_r$. Based on the Euler characteristic and height functions we can define the Euler characteristic transform (ECT) for 
shapes and surfaces
\begin{align*}
 \ECT(M): S^{d-1} &\to \Z^\R\\
 v&\mapsto (\chi(M,v)).
 \end{align*}
A direct consequence of the proof of Theorem \ref{inject} is that the Euler characteristic transform (ECT)  is also injective. We thus can show by an analogous proof to that in Corollary \ref{sufdist} that the ECT is a sufficient statistic for shapes and surfaces. 

The sufficient statistic is now a collection of curves which can have
a simple inner product structure. We can rescale the domain of the 
Euler characteristic curves to be in the interval $[-1,1]$. Assume for
purposes of computation we use a $K$ vectors sampled from $S^{d-1}$, we now have $K$ curves on the unit interval which is much easier to work with than a persistence diagram. Denote the Euler characteristic curve for a given direction  $f=\chi(M,v)$ over the 
interval $[-1,1]$ we smooth the Euler characteristic curve by constructing the following cumulative curve
$F(x) = \int_{0}^x f(y) dy,$ see Figure 6. The resulting transform is a collection of $K$ smooth curves $\{F_1,...,F_K\}$.

\begin{figure}[ht]
\begin{center}
\begin{tabular}{ccc}
\begin{tabular}{c}
\includegraphics[width=1.3in]{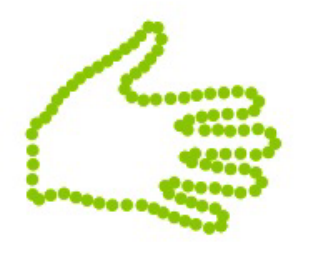} 
\end{tabular}
&  
\begin{tabular}{c}
\includegraphics[width=1.3in]{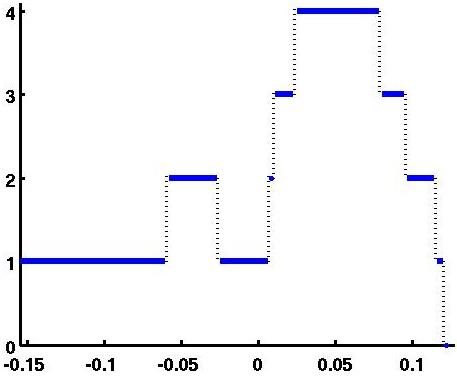} 
\end{tabular}
 &  
\begin{tabular}{c}
\includegraphics[width=1.3in]{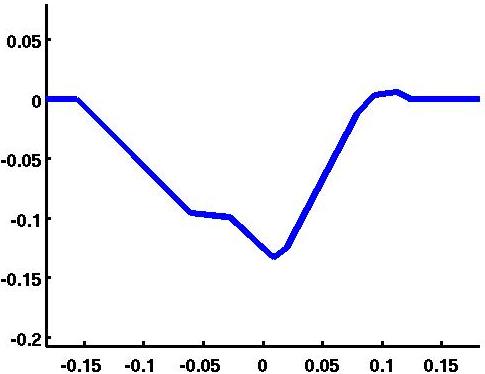} 
\end{tabular}
\\
 \qquad (a)  &  \qquad (b) &  (c) 
\end{tabular}
\caption{(a) A 2D contour of a hand (b) EC curve of the 2D contour of a hand and (c) the associated (centered)
cumulative EC-curve.}
\label{F:eccurves}
\end{center}
\end{figure}

Given the $K$ smooth curves we can define an exponential family model of the form
$$p_\theta(x) = a(\theta) \, h(x) \, \exp\left(- \sum_{k=1}^K \langle \theta, F_k(x) \rangle\right).$$ 
For computational reasons we  sample the curves $\{F_k\}_{k=1}^K$ at $T$ points in the interval $[-1,1]$.
We can now think of the transform of a shape as matrix $\mathbf F$ with ${\mathbf F}_{ij}$ the function value of the
$j$-th point in the $i$-th Euler characteristic curve. A common density function for matrices is the matrix variate normal \cite{Dawid81}
which is a generalization of a multivariate normal  which for the $K\times T$ matrix $\mathbf F$ is
$$p({\mathbf F} \mid {\mathbf A}, {\mathbf U}, {\mathbf V}) = \frac{\exp\left(-\frac{1}{2}  \mbox{tr}[  {\mathbf V}^{-1}({\mathbf F} - {\mathbf A})^T {\mathbf U}^{-1} ({\mathbf F} - {\mathbf A})]\right)}{ (2\pi)^{KT/2} |{\mathbf V}|^{L/2} |{\mathbf U}|^{K/2}},$$
where the parameter ${\mathbf A}$ is the mean matrix, the parameter ${\mathbf U}$ is a $K \times K$ covariance matrix modeling the covariance between curves , and the parameter ${\mathbf V}$ is a $T \times T$ covariance matrix modeling the covariance between
points in the Euler characteristic curve. 

Using the ECT and the matrix variate model, given $n$  meshes $(M_1,...,M_n)$ we can define a likelihood model
\begin{equation}
\label{likmdl}
\mbox{Lik}(M_1,...,M_n \mid {\mathbf A},  {\mathbf U}, {\mathbf V} ) = \prod_{i=1}^n p({\mathbf F}(M_i) \mid  {\mathbf A},  {\mathbf U}, {\mathbf V} ),
\end{equation}
with parameters $\theta = \{   {\mathbf A},  {\mathbf U}, {\mathbf V}  \}$ and ${\mathbf F}(M_i)$ is the matrix constructed from the ECT of 
a mesh $M_i$. This likelihood model can serve as an alternative to landmark based statistical models.

\subsection{Surfaces homeomorphic to spheres}\label{homeomorphic}

We often have further structure for the set of simplicial complexes of interest, such as they are homeomorphic to a sphere. A common example is the surface of a
solid contractible object -- e.g. the boundaries of many physical objects. In Section \ref{results} we examine the calcaneus  or heel bone of
various species. The boundaries of these bones are homeomorphic to $S^2$.

\begin{cor}\label{cor:inject}
Let $\mathcal{S}(d,k)$ be the space of piecewise linear surfaces in $\R^d$ that are homeomorphic to $S^k$. The $0$-th dimensional persistent homology transform is injective when the domain is either $\mathcal{S}(3,2)$ or $\mathcal{S}(3,1)$ or $\mathcal{S}(2,1)$.
\end{cor}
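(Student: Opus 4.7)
The plan is to derive this corollary directly from the injectivity of the full $\PHT$ (Theorem \ref{inject} together with the two-dimensional corollary just above) by showing that on these three domains the $0$-th dimensional transform already encodes the full transform. Because $\PHzeroT(M)$ records $X_0(M,v)$ for every unit direction and $-v$ is again a unit direction, knowing $\PHzeroT(M)$ is the same as knowing the antipodal pair $\bigl(X_0(M,v),\, X_0(M,-v)\bigr)$ for every $v\in S^{d-1}$. The sphere-reduction proposition in Section \ref{homeomorphic} says that for $M$ in any of $\mathcal{S}(3,2)$, $\mathcal{S}(3,1)$, or $\mathcal{S}(2,1)$, this antipodal pair of $H_0$ diagrams already determines $X_k(M,v)$ for $k=1,2$. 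Hence $\PHzeroT(M)$ determines $\PHT(M)$ on each of the three listed spaces.

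Given this reduction the argument is short. Suppose $M_1, M_2$ both lie in some $\mathcal{S}(d,k)$ from the three listed and $\PHzeroT(M_1)=\PHzeroT(M_2)$. By the above, $\PHT(M_1)=\PHT(M_2)$. The spaces $\mathcal{S}(3,2)$ and $\mathcal{S}(3,1)$ are subspaces of $\M_3$, on which $\PHT$ is injective by Theorem \ref{inject}; similarly $\mathcal{S}(2,1)\subset\M_2$, on which $\PHT$ is injective by the preceding corollary. Either way $M_1=M_2$, giving injectivity of $\PHzeroT$ on the given domain.

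The real mathematical content thus rests on the sphere-reduction proposition, and the main obstacle in a self-contained treatment is to justify it rather than the corollary itself. My strategy there would be to exploit the fact that the filtration by $h_{-v}$ is exactly the superlevel-set filtration of $h_v$ after reversing the parameter; for $M$ homeomorphic to a sphere, Poincar\'e--Lefschetz duality then pairs the $H_1$ and $H_2$ events of the sublevel filtration against the $H_0$ events of the superlevel filtration. Concretely, at a generic critical height an index-$1$ or index-$2$ Morse-type event for $H_*$ of the sublevels in direction $v$ manifests as an index-$0$ event in direction $-v$, so that $X_1(M,v)$ and $X_2(M,v)$ can be read off by matching the off-diagonal points of $X_0(M,-v)$ against those of $X_0(M,v)$ and accounting for the fixed global homology of $M$. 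The delicate step is handling non-generic height functions (directions perpendicular to edges, or multiple vertices at equal heights), which I would resolve by perturbing $v$ and using the continuity of $v\mapsto X_k(M,v)$ established in Lemma \ref{lem:cont} to take a limit.
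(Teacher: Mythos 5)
Your proposal follows essentially the same route as the paper: the paper likewise reduces to the injectivity of the full $\PHT$ by observing that for curves the unique essential $H_1$ class (resp.\ for the $2$-sphere the unique essential $H_2$ class) is born when the shape is last touched, i.e.\ at a height readable from $X_0(M,-v)$, and that the $H_1$ diagram of a $2$-sphere is the flip of the non-essential part of $X_0(M,-v)$ via Alexander duality inside $M$, using that $M(v)_h^c$ is homotopy equivalent to a sublevel set in direction $-v$ and that persistent cohomology and persistent homology yield the same diagrams. Your Poincar\'e--Lefschetz sketch is this same duality step in slightly different clothing, so the two arguments coincide in substance.
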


\begin{proof}
Let us first consider the cases where the domain is either $\mathcal{S}(3,1)$ or $\mathcal{S}(2,1)$. It is sufficient to show that from $\PHzeroT(M)$ we can deduce the persistence diagrams of dimensions $1$ as all higher dimensional homology classes are always zero. Pick a direction $v$. Since $M$ is homeomorphic to a sphere the only $H_1$ class is born exactly when the entire loop is revealed. This is the time that the loop is first hit from the direction $-v$. Since we know the $0$-th dimensional persistence classes for direction $v$ we know at what height this is.

Let us now consider the case where the domain is $\mathcal{S}(3,2)$. It is sufficient to show that from $\PHzeroT(M)$ we can deduce the persistence diagrams of dimensions $1$ and $2$. 
Pick a direction $v$.
We know the $0$ dimensional persistence classes by assumption. 
Since $M$ is homeomorphic to a sphere the only $H_2$ class is born exactly when the entire surface is revealed. This is the time that the surface is first hit from the direction $-v$. Since we know the $0$-th dimensional persistence classes for direction $v$ we know at what height this is.

To find the $H_1$ persistent homology classes we will use Alexander duality. We will need to use persistent cohomology which is very similar to persistent homology but the induced maps on cohomology go in the opposite direction. The important fact we will use is that persistent homology and persistent cohomology of the same filtration result in the same persistence diagram \cite{de2011dualities}. 

Now $M$ by assumption is (homeomorphic to) a sphere and $M(v)_h$ is a compact, locally contractible subset of the $M$. By Alexander duality $H_1(M(v)_h)$ is isomorphic to $\tilde{H}^{0}(M(v)_{h}^c)$ where $\tilde{H}^*$ is reduced cohomology and $A^c$ denotes the complement of $A$ in $M$. The reduced homology means that we ignore the essential $H^0$ persistence class. Now $M(v)_{h}^c$ is homotopy equivalent to $ M(-v)_{-h-\epsilon}$ for sufficiently small $\epsilon$. These isomorphisms are compatible with the induced maps from inclusions. For $h_1\leq h_2$ we have the following commutative diagram where the vertical homomorphism are the homomorphisms induced by inclusion:

$$
\xymatrix{
H_1(M(v)_{h_1}) \ar[d]  \ar[r]^\sim 	&\tilde{H}^0(M(v)_{h_1}^c)=\ar[d]\tilde{H}^0(M(-v)_{-h_1}^-)\\
H_1(M(v)_{h_2} )\ar[r]^\sim   		&\tilde{H}^0(M(v)_{h_2}^c)=\tilde{H}^0(M(-v)_{-h_2}^-)}
$$	

This implies that every $H_1$ persistent homology class for direction $v$ that is born at $a$ and dies at $b$ gets sent under these isomorphisms to an $H^0$ persistent homology class for direction $-v$ that is born at $-b$ and dies at $-a$. Since the persistence diagrams computed by cohomology and homology are the same, we can compute the $H_1$ persistence diagrams by flipping the persistence diagrams for $H_0$ without the essential class.
\end{proof}

\subsection{Unaligned objects and shape statistics }\label{align}

A classic framework for modeling shapes and surfaces is that of shape statistics \cite{Bookstein97,Kendall77,Kendall84} where a set of $k$ locations or landmarks
on a $d$-dimensional object (typically considered a manifold) are fixed with $k > d$ and the data consist of the points at these $k$ landmarks, a 
$k$-ad \cite{Bhattacharya2008}. A central idea in shape statistics \cite{Bookstein97,Kendall77,Kendall84} is that $k$-ads should be compared modulo a group of transformations
given by how the data are generated. Typically, these transformations are size or scaling, rotation, and translation.

We now describe how we can adapt our methodology to account for invariance with respect to scaling, translation, and rotation. We are given $n$ objects
$\{M_1,...,M_n\}$, either all in $\R^2$ or all in $\R^3$, and we proceed in three steps: (1) we center the objects, (2) we scale the objects, (3) for each pair of objects we consider all the distances under different rotations and take the smallest of these distances. \\

\noindent {\bf Centering}: Fix a set of equally spaced directions $\{v_i\}$ (or approximately evenly spaced in for objects in $\R^3$) . We will first give a procedure to center an object at the origin with respect to these directions. We are effectively centering the convex hull of the object. For each direction set $\lambda_i$ as the time the first component on the shape is seen
in direction $v_i$ -- that is $\lambda_i$ is the smallest $x$ such that $(x,\infty) \in X_0(M,v_i)$. 
Let $\lambda^u_i$ denote the scalar when the origin is at $u$. (This is the same as the value obtained by taking the vectors at the normal origin and 
shifting $M$ by $u$ giving $M-u = \{x-u:x \in M\}.$) The $v_i$ are unit vectors and $\lambda_i^u$ are signed perpendicular distances to the same hyperplane which has normal $v_i$. The sign is negative if $u$ is on the $v_i$ side of the hyperplane and are positive if $u$ is on the $-v_i$ side of the hyperplane.

Consider two different potential centers $u$ and $w$.  Considering the $\lambda_i$ as signed distances implies that $\lambda^u_i - \lambda^w_i = (w-u)\cdot v_i$ and hence
\begin{align}\label{eq:centers}
\sum_i \lambda^u_i v_i- \sum_i \lambda^w_i v_i= \sum_i(\lambda^u_i - \lambda^w_i )v_i= \sum_i [(w-u)\cdot v_i] v_i=K(w-u).
\end{align}
 where $K$ is a constant independent of $w$ and $u$ so long as the $v_i$ are symmetric with respect to some basis set of vectors.  This $K$ can be easily computed given the specific set of directions of $v$.

We will define the center to be the point $u$ such that $\sum_i \lambda^u_i v_i=0$. The equation \eqref{eq:centers} shows that this is unique and that this center is computed (starting with $w$ as the origin) by
$u= \frac{1}{K}\sum_i \lambda_i v_i$

We center $M$ by
shifting it by $u$ 
$${\tilde M} = M-u = \{x-u:x \in M\}.$$ 
The PHT of $\tilde M$ is related to that of $M$ in that for each persistent homology class at $(a,b) \in X_p(M,v)$
we have the same persistent homology class at $(a-u \cdot v,b-u \cdot v) \in X_p({\tilde M},v)$. For simplicity of notation we rename the centered
object $M$, $M \leftarrow {\tilde M}$. We apply this procedure to each object. \\

\noindent {\bf Scaling}: Set an arbitrary scale parameter $C$, e.g. $C=1$.  Compute $L= - \sum_i \lambda_i>0$ -- these are the same $\{\lambda_i\}$ used in the centering procedure. We now rescale $M \leftarrow \frac{C}{L} M = \{\frac{C}{L} x : x \in M\}$, for the scaled object $\sum_i \lambda_i  =C.$ This is done for each object. \\

\noindent {\bf Rotating}: For each pair $M_j, M_k$ of centered and scaled objects we now consider the different distances under different rotations. We set a subgroup in the group of rotations $\{R_i\}_{i=1}^G$.
Set the unaligned distance between $M_j$ and $M_k$ to be $$\inf_{i=1,...,G} \d_{\M_d}(M_j, R_i(M_k)).$$
This unaligned distance gives a metric on unaligned objects in $\R^d$

In the case of objects in $\R^2$ we can take the $\{v_i\}$ to be evenly spaced unit vectors. The set of rotations can be $\{R_k: R_k v_i =v_{k+i}\}$. In this case we do not need to compute any more persistence diagrams - we just relabel the ones that have already been computed.

In Section \ref{sil} we will applying the above procedure to the bounding circles of a set of silhouette images in the plane. 
 
\section{Results real and simulated data}\label{results}

To illustrate the utility of the PHT we look at two related problems, computing the pairwise distance between a set of aligned objects, and comparing unaligned objects. Before stating results on a data set of shapes and real data consisting of bones we first state the algorithm used to compute distances between objects.

\subsection{Distance algorithm}\label{subsec:alg}

To compute the PHT of an object we need to compute the persistence diagrams of the height function from various directions. For purposes of computation we will use a finite set of vectors sampled from $S^{d-1}$ and average the distance between diagrams of two objects, this serves a a numerical approximation of the distance defined in \eqref{distance}. 

Let $O_1, \ldots O_N$ be the objects we wish to compare. Let $v_1, v_2, \ldots v_K$ be the normal vectors we use. Let $f_{ik}$ be the height function on $O_{i}$ in the direction $v_k$. Let $X(f_{ik})$ be the persistence diagram constructed using sublevel sets of $f_{ik}$. The following pseudocode states the algorithm that computes distances: \\  

\noindent Distance computation algorithm \\
\noindent  {\bf Data}: Objects $O_1, \ldots O_N$,  $K$ directions $v_1,..,v_K$\\
\noindent {\bf Results}: Pairwise distances \\
\noindent  initialization - all pairwise distances $d_{ij}=d(O_i,O_j)$ set to $0$;\\
\noindent For $i=1$ to $N$\\
\indent For $k=1$ to $K$\\
\indent \indent Compute $X(f_{ik})$;\\ \\
\noindent For $i=1$ to $N$; \\
\indent For $j\leq i$ \\
 \indent \indent For $k=1$ to $K$ \\
\indent \indent \indent $d_{ij} \stackrel{+}{=} d(X(f_{ik}),X(f_{jk}))$;\\
\indent \indent $d_{ij}=\frac{d_{ij}}{K}$; \\ \\

%
%
We need a set of directions $v_1, v_2, \ldots v_K$ in the above algorithm. For $d=2$ (that is shapes in the plane) we used $64$ evenly spaced directions. For $d=3$ (that is simplicial complexes in $\R^3$) we used $162$ directions form $S^2$  based on a grid constructed by subdividing an icosohedron. For $0$-dimensional persistence we use the union-find algorithm for efficiency.  The amortized time per operation 
for the union-find algorithm is $O(\alpha(n))$, where $\alpha(n)$ is the inverse of the Ackermann
function $A(n,n)$ which grows extremely quickly. For any reasonable $n$, $\alpha(n)$ is less than $5$. Thus, the amortized running time per operation is effectively a small constant. Thus, in each direction, computing the persistence diagram $X_0(M,v)$ is effectively linear in the number of vertices in the simplicial complex.

For each pair of objects we use the Hungarian algorithm to compute the distances between two persistence diagrams in each of the directions. The runtime complexity of the Hungarian algorithm is  
$O(m^3)$ where $m$ is the number of off-diagonal points in the two diagrams $X$ and $Y$ combined.

\subsection{Results on data sets}

We have used the metric on the space of PHTs to analyze a variety of data sets. In the appendix we consider ellipsoids and hyperboloids with restricted $z$ values. In these examples we have used the algebraic structure to compute and describe what the PHTs are. We also analyze the resulting distance matrices using multidimensional scaling and give a geometrical interpretation of the coordinates. In this section we present the results when applying the PHT to both a shape database (of contractable shapes in the plane) and to a data set of pre-aligned calcanei of primates.

\subsubsection{Results on a shape database} \label{sil}

To examine how well we could measure distances between planar shapes and how well our method can align shapes with respect to scaling, translation,
and rotation we studied the performance on a standard shape data set. A shape database that has been commonly used in image retrieval is the 
MPEG-7 shape silhouette database \cite{Sikora01}. We used a subset of this database \cite{Lateckietal00} which includes seven class of objects:
 Bone, Heart, Glass, Fountain, Key, Fork, and Axe. There were twenty examples for each class for a total of 1400 shapes. The shapes are
 displayed in Figure \ref{images}. 
 {
 \begin{figure}[hbt]
\begin{center}
\includegraphics[height=1.1in]{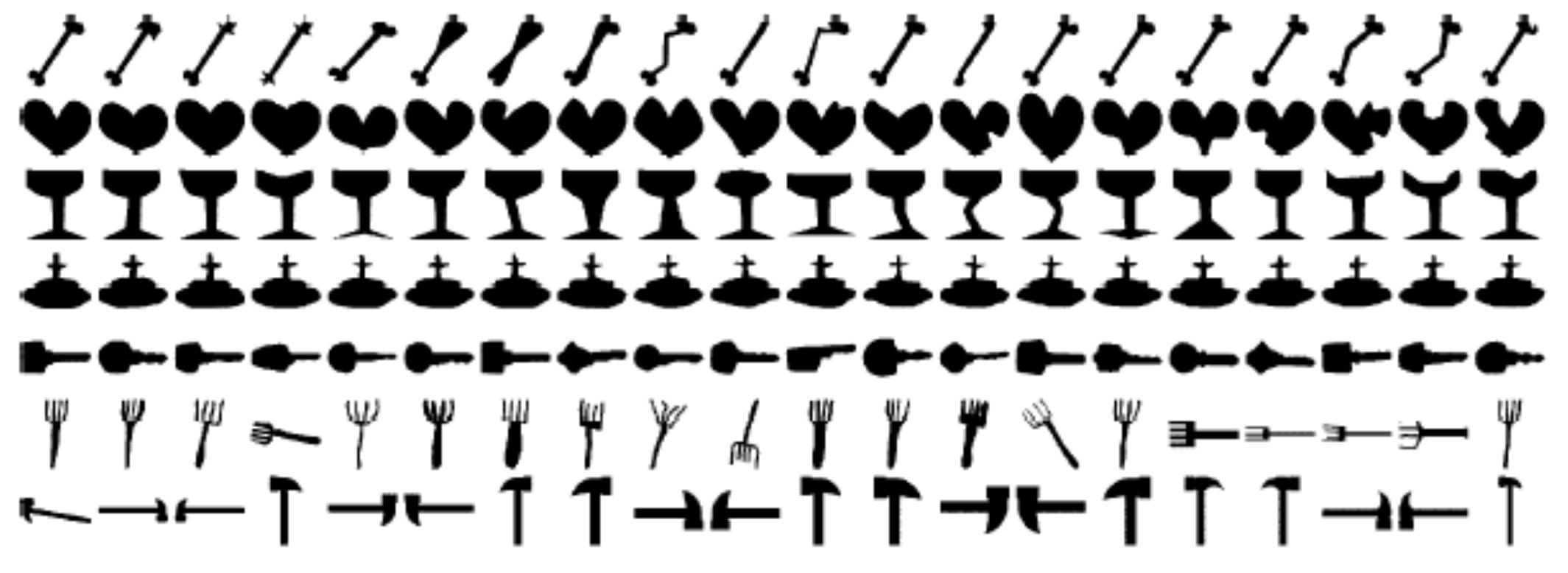}\\
\caption{\label{images}
The subset of the silhouette database. Each row corresponds to one the objects: Bone, Heart, Glass, Fountain, Key, Fork, and Axe. Note that
while the objects are distinct, there is a great deal of variation within each object.
}
\end{center}
\end{figure}
 
 \samepage
 
\begin{figure}[hbt]
\begin{center}a)
\includegraphics[height=2.2in]{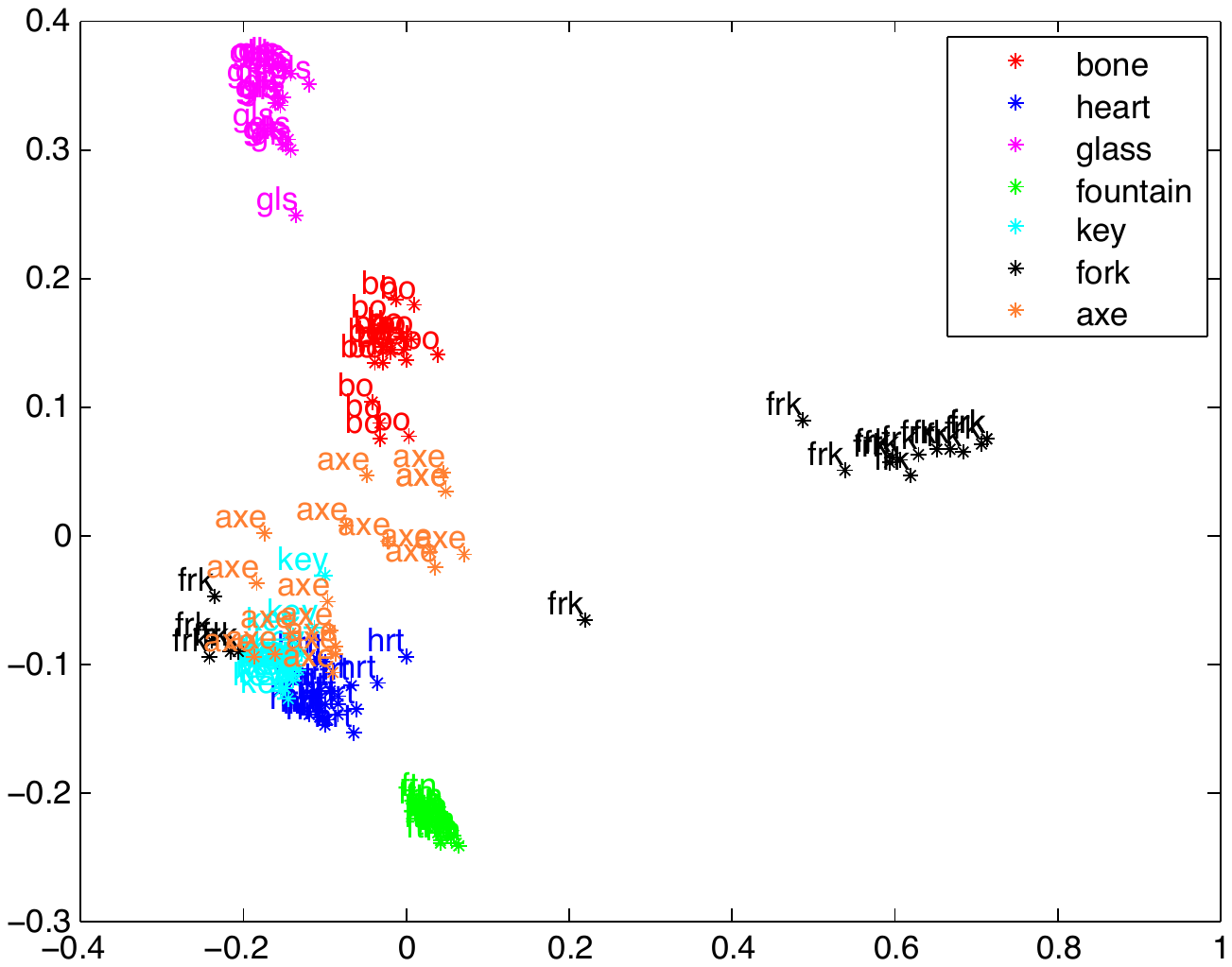} 
b)\includegraphics[height=2.1in]{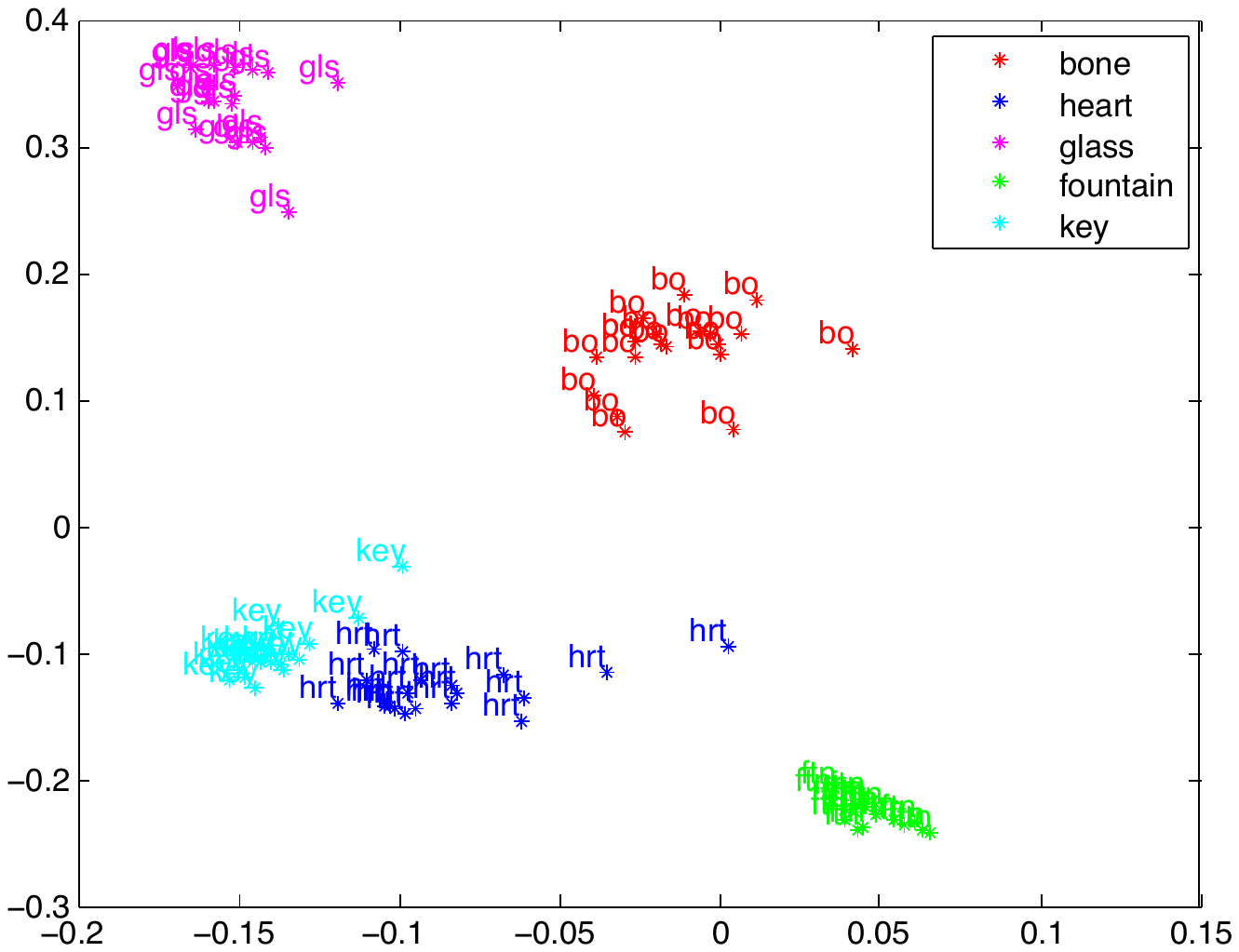} \\
c)\includegraphics[height=5in]{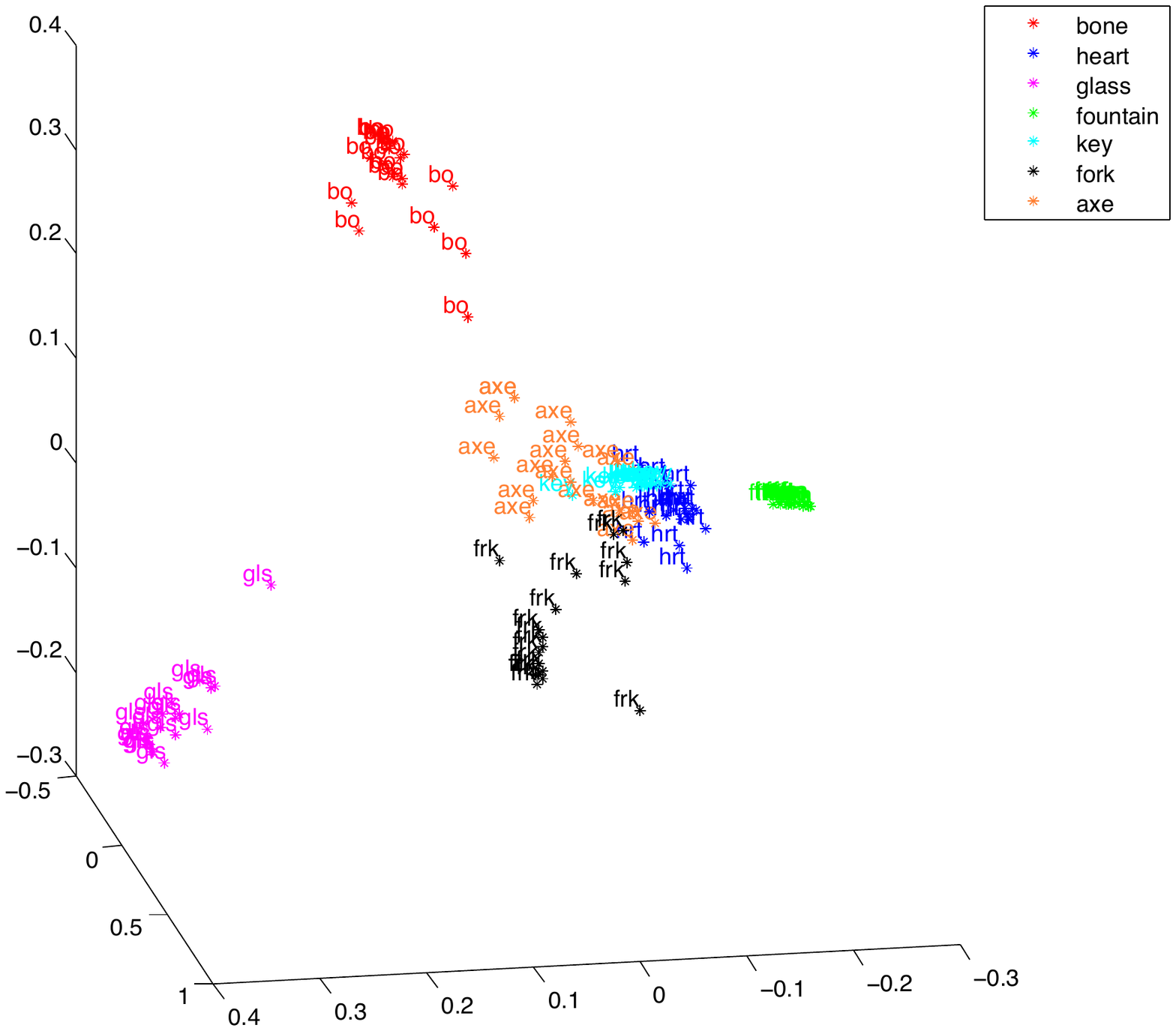}
\caption{\label{imredpics}
a) The two dimensional projection using multidimensional scaling. The classes are Bone in red, Heart in blue, Glass in magenta,
Fountain in green, Key in cyan, Fork in black, and Axe in orange. Note that Axe and Fork do not form tight clusters. b) The two dimensional projection
without including the Fork and Axe class illustrates that all the other classes are clustered. c)  The three dimensional projection using multidimensional scaling. }
\end{center}
\end{figure}
 }

 We used the perimeters of the silhouettes which are available at \cite{Gao}. We applied the alignment algorithm we stated in
Section \ref{align} to shift and scale the silhouettes. These perimeters are all homeomorphic to a circle so we used the $0$-th dimensional 
persistent homology transform with $64$ evenly spaced directions. We computed the distances between all objects - pairwisely checking under different rotations and taking the minimum as outlined in Section \ref{align}. We then used 
multidimensional scaling \cite{KruskalWish78} on the computed distance matrix to project the data into two or three dimensions. In 
Figure \ref{imredpics} we see that except for the Axe and Fork classes the objects are separated.


\subsubsection{Results on real data--calcanei of primates}

Information on the pattern of change in anatomical form and diversity of form through time comprises evidence fundamental to hypotheses in evolutionary biology. Often there is great interest in relating the genetic variation in species with variation in phenotypes such as bones. A challenge in modeling phenotypic variation is developing automated methods to measure the variation or distance between shapes  \cite{Boyeretal11,Boyeretal13,Gladmanetal13}.

The data consist of heel bones (calcanei) form 106 extant and extinct primates \cite{Boyeretal13,Gladmanetal13}. The bones were scanned using microCT scanning and the data for each bone consists of thousands of points in $\R^3$. Details can be found in  \cite{BoyerSeiffert13}.  See Figure 
 \ref{calc1} for two pictures of a calcaneus at different angles. See the Appendix for a list. From these distances we constructed a multidimensional scaling plot of the samples with $D=2$ (see Figure \ref{bonesproj}). We compared the pairwise distances between the 
 projections of the bones by three method:
 \begin{enumerate}
 \item[i.] Manual: The original analysis of this sample in Gladman et al. \cite{Gladmanetal13} was based on 27 manually placed landmarks per bone. 
The landmark coordinates were then scaled and aligned by a generalized Procrustes superimposition and finally analyzed with principal coordinates analysis. We then projected the samples onto the first two principle components.
\item[ii.] Automated protocol: An automated method was developed in \cite{Puente13} to compute distances between bones as well as to align the bones to standard orientation. This alignment protocol also outputs pairwise procrustes distances based on 1000 automatically positioned pseudolandmarks.  These pairwise distances can then be projected onto two principle components.
\item[iii.] PHT: We first aligned the bones using the same procedure as in the automated protocol above. We then computed pairwise distances between bones using the PHT. These pairwise distances can then be projected onto two principle components.
\end{enumerate} 
 
\begin{figure}[htb]
\begin{center}
\includegraphics[height=2.5in]{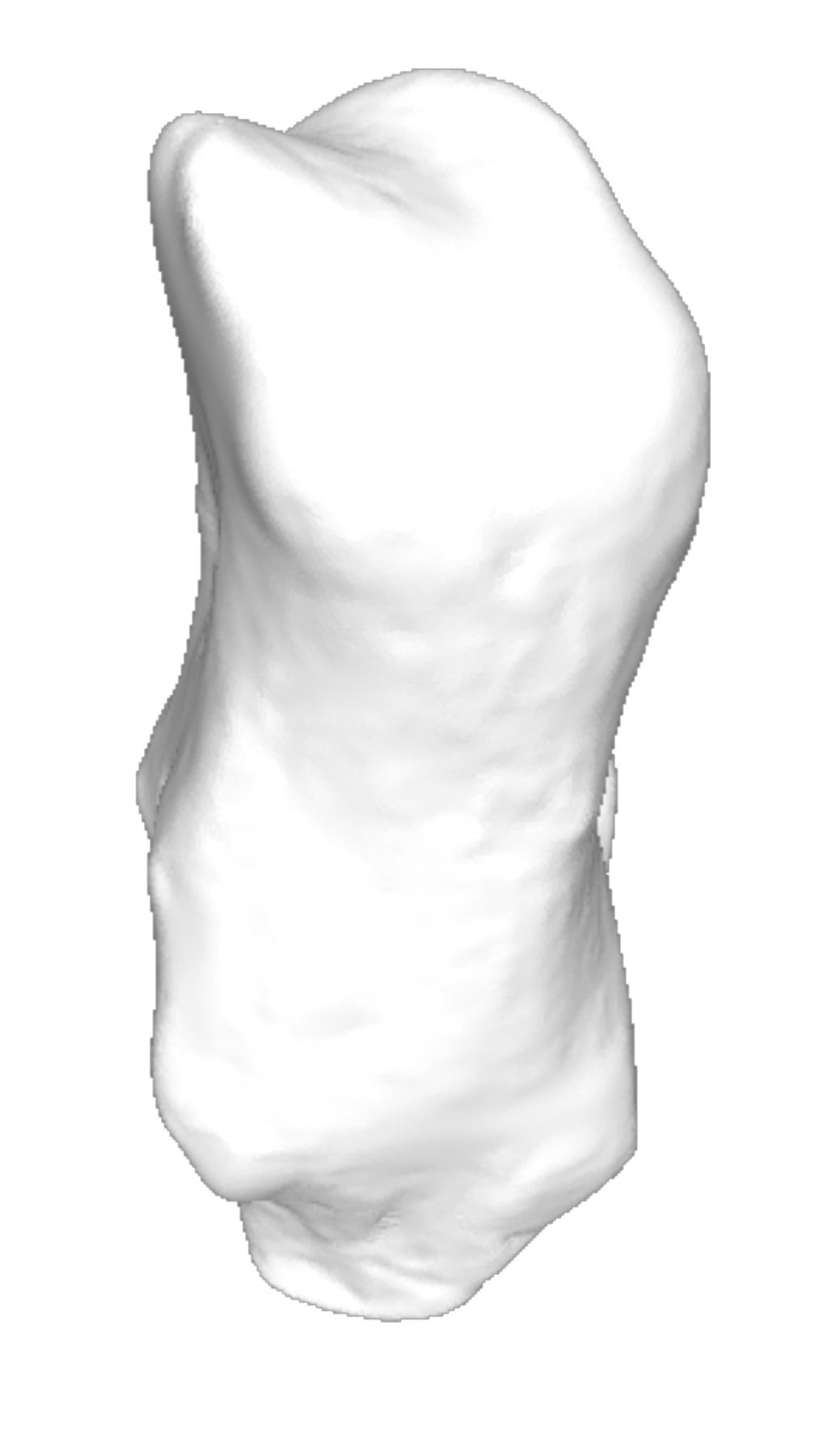}  \hspace{.3in}
\includegraphics[height=2.5in]{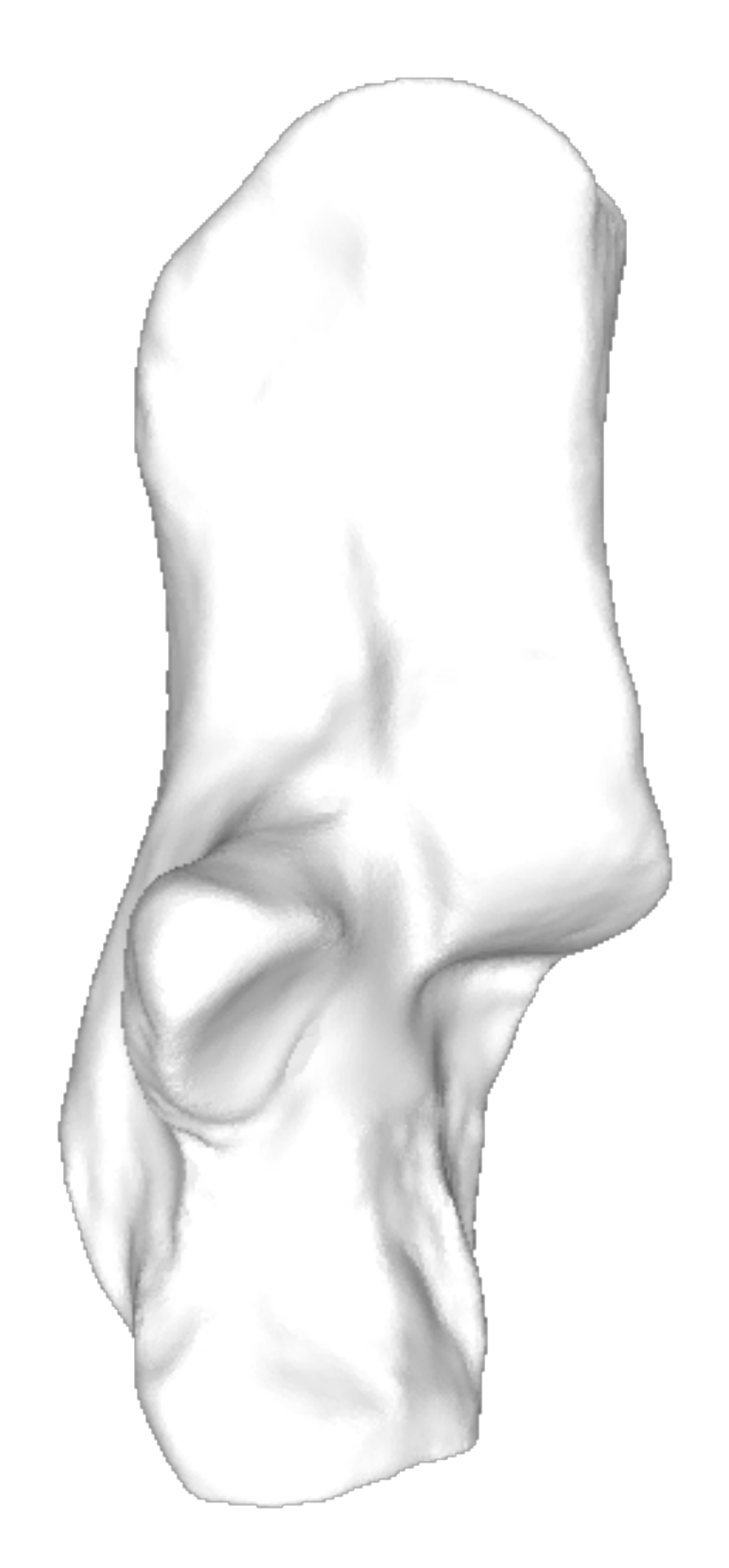}
\caption{\label{calc1}
Images of a calcaneus from two different angles.
}
 \end{center}
\end{figure}
 
 Given these three analyses of the same 106 bones projected into two dimensions. We compared the distances between corresponding bones for each projection by optimizing for rotation, and translation. This was done using the iterative closest point (ICP) algorithm \cite{ICP} which takes two point sets and computes the optimal rotation and translation to align the point sets. We compared the combined Euclidean distance between the aligned points between each distance computation method: \\
 (1) Distance between Automated protocol and PHT: .014 \\
 (2) Distance between Manual and Automated protocol: .015 \\
 (3) Distance between Manual and PHT: .016.\\
  The two automated methods seem to be closer to each other than the manual landmark based method. It should be recalled here that we used the same alignment processes for both the Automated and the PHT -- if we had used a different alignment process, such as that in Section \ref{align}, we would have subtlety affected the distances. In addition the distance between the PHT transform and the manual protocol is greater than the distance between the automated protocol and the manual protocol.  Results of MDS on all three distances are displayed in  Figure \ref{bonesproj}. A qualitative analysis suggests that the PHT distances may be outperforming the other two methods.


{\begin{figure}[hbt]
\begin{center}
\includegraphics[height=2.3in]{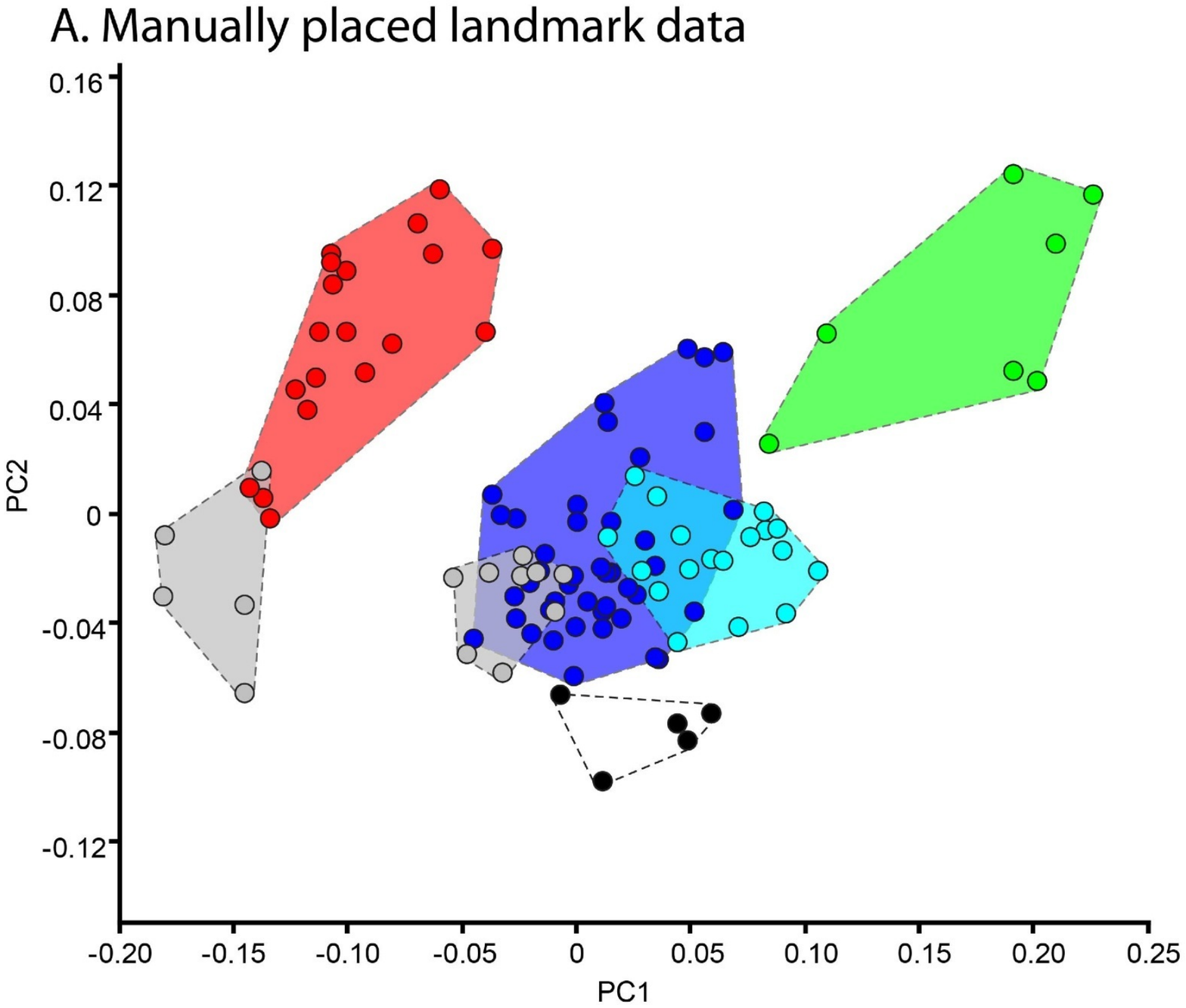} \includegraphics[height=2.3in]{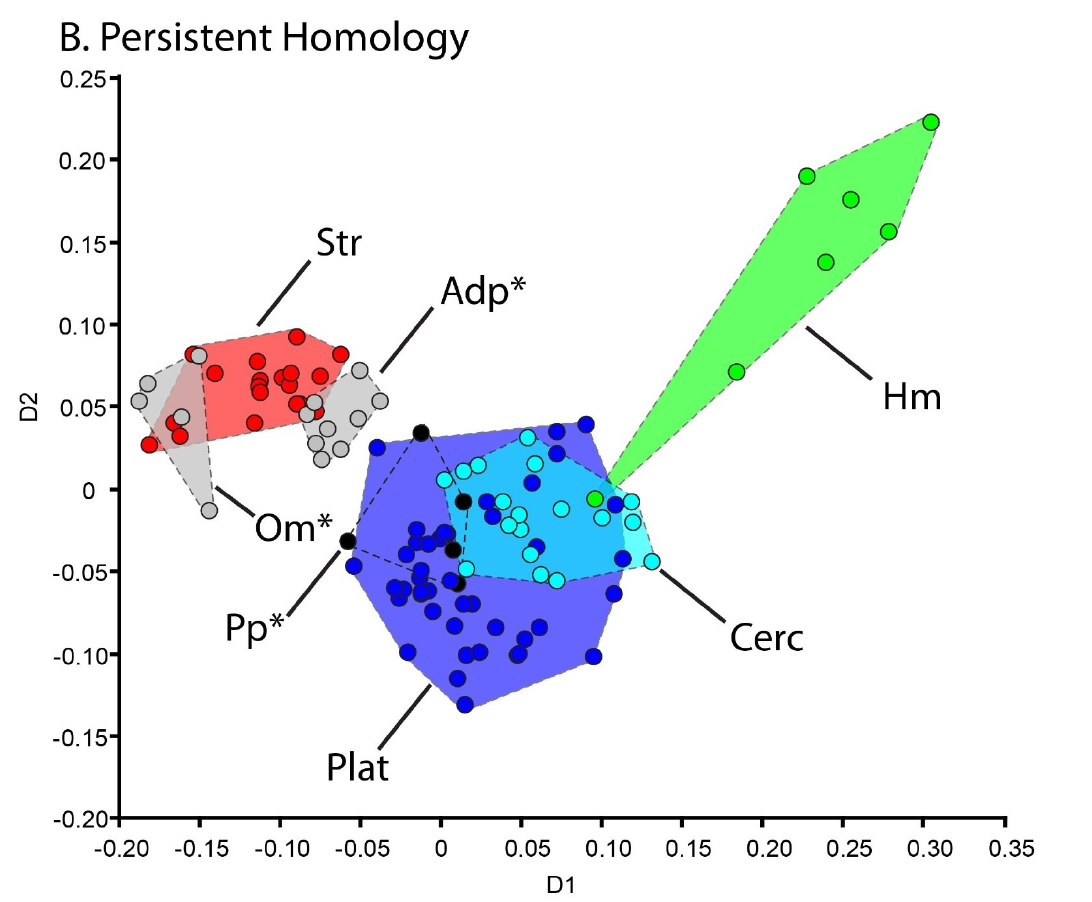} \\
\includegraphics[height=2.3in]{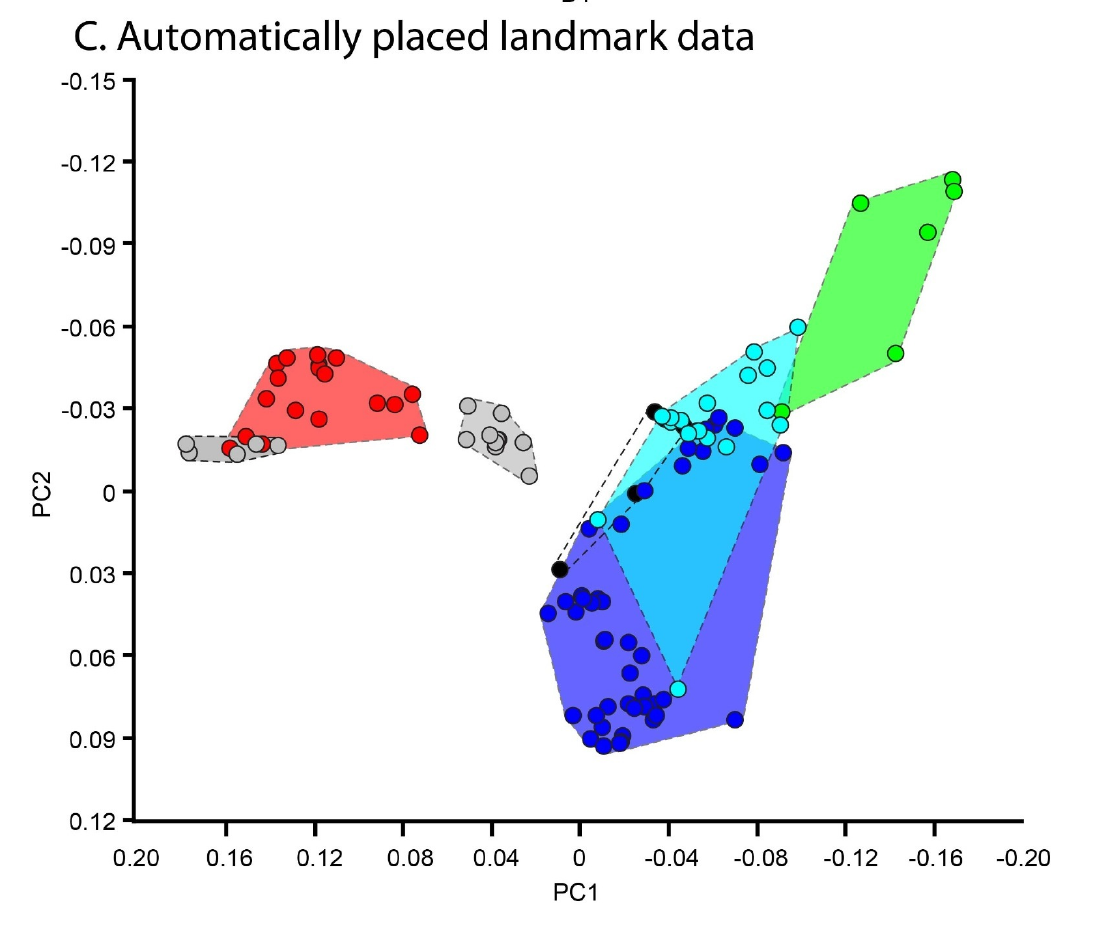}
\caption{\label{bonesproj} Phenetic clustering of phylogenetic groups of primate calcanei $(n=106)$.  67 genera are represented. See appendix for specimen information. Asterisks indicate groups of extinct taxa. Abbreviations: Str, Strepsirrhines; Plat, platyrrhines; Cerc, Cercopithecoids; Om, Omomyiforms; Adp, Adapiforms; Pp, parapithecids; Hmn, Hominoids.  Note that more primitive ÒprosimianÓ taxa cluster separately from ÒsimiansÓ (Om, Adp, Str.).  Also note that ÒmonkeysÓ (Plat, Cerc, Pp) cluster mainly separately from apes (Hmn). This is virtually the same type of clustering observed with more labor intensive (and fundamentally subjective) manually collected landmarks.  The position of  adapiforms between strepsirrhines and platyrrhines is consistent with previous descriptions of the phenetic affinities of this group. (a) is using the manual alignment method, (b) is using the PHT method, (c) is the automated landmark based method.
}
 \end{center}
\end{figure}

\section{Discussion}

In this paper we stated the Persistent Homology Transform as a statistic to capture the information in a shape. Our main result is to prove that this
statistic is sufficient. Two useful features of our method is that we do not require user-specified landmarks, we also can measure distances between
shapes that are not isomorphic since we are using topological features.

Several questions remain regarding this approach including:
\begin{enumerate}
\item[(1)] We suspect that our sufficiency results will extend to simplicial complexes lying in Euclidean spaces for dimensions greater than $3$ and to more general compact subsets of Euclidean space such as manifolds. However, we do not have a proof;
\item[(3)] We are very interested in developing more robust methods for aligning objects. The method for accounting for rotation in Section \ref{align} is too computationally heavy for surfaces in $\R^3$, but reasonable for shapes in $\R^2$;
\item[(4)] Can we examine the sufficiency of other geometric and topological summaries of the data and use this to better understand classic shape space models.
\end{enumerate}

\section*{Funding}
This work was supported by the National Institutes of Health (Systems Biology): [5P50-GM081883, AFOSR: FA9550-10-1-0436, and NSF CCF-1049290 all to SM].

\section*{Acknowledgements}
KT would like to thank Shmuel Weinberger for helpful conversations.
SM would like to thank Jesus Puente, Ingrid  Daubechies, and Jenny Tung for useful comments. SM would also like to thank W. Mio and L. Ma for Figure 10.

\appendix
\section{Calcaneal data set}
Full calcaneal data set. Bone \# column can be used to look up
specimens in the 3D alignment file.
\begin{table}[ht]
\centering
\begin{tabular}{|c|c|c|}
\hline
Taxon & Specimen ID & Bone \# \\
\hline
\hline 
{\em Avahi laniger} & AMNH 170461 & 1 \\
\hline
{\em Cheirogaleus major} & AMNH 100640 & 2 \\
\hline
{\em Daubentonia madagascariensis} &  AMNH 185643 & 3\\
\hline
{\em Eulemur fulvus} & AMNH 17403 & 4 \\
\hline
{\em Eulemur fulvus} &AMNH 31254 &  5 \\
\hline
{\em Hapalemur griseus} &  AMNH 170675 & 6 \\
\hline
{\em Hapalemur griseus} &  AMNH 170689 & 7\\
\hline
{\em Hapalemur griseus} & AMNH 61589 & 8\\
\hline
{\em Indri indri} &  AMNH 100504 & 9 \\
\hline
{\em Indri indri} & AMNH 208992 & 10 \\
\hline
{\em Lemur catta} &  AMNH 150039 & 11 \\
\hline
{\em Lemur catta} & AMNH 170739 & 12 \\
\hline
{\em Lemur catta} & AMNH 22912 & 13 \\
\hline
{\em Lepilemur mustelinus} &  AMNH 170565 & 14 \\
\hline
{\em Lepilemur mustelinus} &  AMNH 170568 & 15 \\
\hline
{\em Lepilemur mustelinus} &  AMNH 170569 & 16 \\
\hline
{\em Propithecus verreauxi}& AMNH 170463 & 17 \\
\hline
{\em Propithecus verreauxi} & AMNH 170491 & 18 \\
\hline
{\em Varecia variegata} &  AMNH 100512 & 19 \\
\hline
{\em Alouatta seniculus} & AMNH 42316 & 20 \\
\hline
{\em Alouatta seniculus} &  SBU NAl13 & 21 \\
\hline
{\em Alouatta sp.} & SBU NAl17 & 22 \\
\hline
{\em Alouatta sp.} &  SBU NAl18 &  23 \\
\hline
{\em Aotus azarae} &  AMNH 211482 & 24 \\
\hline
{\em Aotus infulatus} & AMNH 94992 & 25 \\
\hline
{\em Aotus sp.} & AMNH 201647 & 26 \\
\hline
{\em Ateles paniscus} &  SBU NAt10 & 27 \\
\hline
{\em Ateles sp.} &  SBU NAt13 & 28 \\
\hline
{\em Ateles sp.} & SBU NAt18 & 29 \\
\hline
{\em Brachyteles arachnoides} & AMNH 260 & 30 \\
\hline
{\em Cacajao calvus} &  AMNH 70192 & 31 \\
\hline
{\em Cacajao calvus} & SBU NCj1 &32 \\
\hline
{\em Callicebus donacophilus} & AMNH 211490 & 33 \\
\hline
{\em Callicebus moloch} & AMNH 244363 & 34 \\
\hline
{\em Callicebus moloch} & AMNH 94977 & 35 \\
\hline
{\em Callimico goeldi}& AMNH 183289 & 36 \\
\hline
{\em Callimico goeldi} & SBU NCa1 & 37 \\
\hline
{\em Callithrix jacchus} & AMNH 133692 &38 \\
\hline
{\em Callithrix jacchus} &  AMNH 133698 & 39 \\
\hline
{\em Cebuella pygmaea} &  AMNH 244101 & 40 \\
\hline
{\em Cebuella pygmaea} & SBU NC1 & 41 \\
\hline
{\em Cebus apella} & SBU NCb4 & 42 \\
\hline
{\em Cebus sp.} & SBU NCb5 & 43 \\
\hline
{\em Chiropotes satanus} &  AMNH 95760 &  44 \\
\hline
{\em Chiropotes satanus} &  AMNH 96123 &  45 \\
\hline
{\em Chiropotes sp.} & SBU NCh2 &46 \\
\hline
{\em  Leontopithecus rosalia}  & AMNH 137270 & 47\\
\hline
{\em Leontopithecus rosalia } &  AMNH 60647 & 48 \\
\hline
{\em Pithecia monachus} &  AMNH 187978 & 49 \\
\hline
{\em Pithecia pithecia} &  AMNH 149149 & 50 \\
\hline
{\em Saguinus midas} &  AMNH 266481 & 51 \\
\hline
{\em Saguinus mystax } & AMNH 188177 & 52 \\
\hline
{\em Saguinus sp.} & SBU NSg12 & 53 \\
\hline
{\em Saguinus sp.} &  SBU NSg2 & 54 \\
\hline
{\em Saimiri boliviensis} & AMNH209934 & 55\\
\hline
{\em Saimiri boliviensis} & AMNH211650 & 56 \\
\hline
{\em Saimiri boliviensis} & AMNH211651 & 57 \\
\hline
\hline
\end{tabular}
\end{table}

\newpage

\begin{table}[ht]
\centering
\begin{tabular}{|c|c|c|}
\hline
Taxon & Specimen ID & Bone \# \\
\hline
\hline 
{\em Saimiri sciureus} & AMNH188080 & 58 \\
\hline
{\em Saimiri sp.} & SBU NSm2 & 59 \\
\hline
{\em Cercopithecus sp.} &  SBU No Number & 60\\
\hline
{\em Cercopithecus sp.} & SBU No Number & 61 \\
\hline
{\em Chlorocebus aethiops}& SBU OCr7 & 62 \\
\hline
{\em Chlorocebus cynosuros}  &  AMNH 80787 & 63 \\
\hline
{\em Colobus geureza} & AMNH 27711 & 64 \\
\hline
{\em Erythrocebus patas} & AMNH 34709 & 65 \\
\hline
{\em Lophocebus albigena} & AMNH 52603 & 66\\
\hline
{\em Macaca nigra}&  SBU OCn1 & 67 \\
\hline
{\em Macaca tonkeana} & AMNH 153402 & 68 \\
\hline
{\em Mandrillus sphinx} & AMNH 89367 & 69\\
\hline
{\em Nasalis larvatus} &  AMNH 106272 & 70 \\
\hline
{\em Papio hamadryas} & AMNH 80774 & 71 \\
\hline
{\em Piliocolobus badius} &  AMNH 52303 &  72 \\
\hline
{\em  Piliocolobus badius} &  ED 4651 & 73 \\
\hline
{\em Pygathrix nemaeus} & AMNH 87255 & 74 \\
\hline
{\em Theropitheucs gelada} & AMNH 201008 & 75 \\
\hline
{\em Trachypithecus obscurus} &  AMNH 112977 & 76 \\
\hline
{\em Gorilla sp.} & AD 6001 & 77 \\
\hline
{\em Hylobates lar} & AMNH 119601 & 78 \\
\hline
{\em Pan troglodytes} & AMNH 51202 & 79 \\
\hline
{\em Pan troglodytes} & AMNH 51278 & 80 \\
\hline
{\em Pongo pygmaeus} & AMNH 28253 & 81 \\
\hline
{\em Symphalangus syndactylus} & AMNH 106583 & 82\\
\hline
{\em Cantius abditus} & USGS 6783 & 83 \\
\hline
{\em Cantius sp.} &  USGS 6774 & 84 \\
\hline
{\em Cantius trigonodus} & AMNH 16852 & 85 \\
\hline
{\em Cantius trigonodus} & USGS 21829& 86 \\
\hline
{\em Cebupithecia sarmientoi} & UCMP 38762* & 87 \\
\hline
{\em Marcgodinotius indicus} & GU 709 & 88 \\
\hline
{\em Mesopithecus pentelici} & MNHN PIK-266* & 89 \\
\hline
{\em Neosaimiri fieldsi} & IGM-KU 89202* & 90 \\
\hline
{\em Neosaimiri fieldsi} & IGM-KU 89203* & 91 \\
\hline
{\em Notharctus sp.} & AMNH 55061 & 92 \\
\hline
{\em Notharctus tenebrosus} & AMNH 11474 & 93 \\
\hline
{\em Omomyid} & AMNH 29164 & 94 \\
\hline
{\em Omomys sp.} &  UM 98604 &95 \\
\hline
{\em Oreopithecus bambolii} & NMB 37* & 96 \\
\hline
{\em Ourayia uintensis} & SDNM 60933 & 97\\
\hline
{\em Parapithecid} &  DPC 15679 & 98\\
\hline
{\em Parapithecid} &  DPC 20576 & 99 \\
\hline
{\em Parapithecid}& DPC 2381 & 100 \\
\hline
{\em Parapithecid} &  DPC 8810 & 101 \\
\hline
{\em Proteopithecus sylviae} &  DPC 23662A & 102 \\
\hline
{\em Smilodectes gracilis} &  AMNH131763 & 103 \\
\hline
{\em Smilodectes gracilis} & AMNH131774 & 104 \\
\hline
{\em Teihardina belgica} & IRSNB 16786-03 & 105 \\
\hline
{\em Washakius insignis} & AMNH 88824 & 106 \\
\hline
\hline
\end{tabular}
\end{table}

\clearpage

\section{Examples of PHT of families of surfaces}
The purpose of this appendix is to examine in detail the process of the PHT of some parameterized families of shapes and also to consider the distances between the PHTs of these shapes. This should help the reader gain an intuition about the PHT. We will consider quadric ellipsoids and hyperboloids which have had restricted $z$-values. We will explain what the persistence diagrams in each direction are. In the first case (ellipsoids) we will consider the normalization process. We will calculate the distances between the PHTs for sets within these families and analyze the results using multidimensional scaling. We also, in the case of ellipsoids, compare the values of the distances found by the algorithm \ref{subsec:alg} of a surface mesh to the value using the exact algebraic set and computed by numerical integration . Showing that these values are close reassures that we are not losing too much information by taking finite approximations (both in the surface mesh stage and the finitely many directions instead of an integral stage). The code used to compute the PHTs  of the surface meshes, and the distances between them, within these examples is available on the journal's website.

\subsection[A]{PHT of ellipsoids}
Let $E(a,b,c)$ denote the ellipsoid with Cartesian equation
$$\frac{x^2}{a^2}+\frac{y^2}{b^2}+\frac{z^2}{c^2}=1.$$
The algebraic description of the ellipsoids means that it is possible for us to find formulas to describe the persistent homology diagrams for the height functions in each direction. Fix a direction described by the unit vector $v=(v_1,v_2,v_3).$

The $X_1( E(a,b,c), v)$ in the PHT of $E(a,b,c)$ has no off diagonal points for all $v\in S^2$. The $X_0( E(a,b,c), v)$ and $X_2( E(a,b,c), v)$ each have exactly one off diagonal point. These correspond to the essential classes of the ellipsoid. The $H_0$ class appearing  when the ellipsoidis first contacted and the $H_2$ class appearing when the entire ellipsoid is completed. Thus to compute these diagrams it is enough to compute the minimum and maximum value of 
$$f(x,y,z):=xv_1 + yv_2 + zv_3$$ for $(x,y,z) \in E(a,b,c)$, that is $(x,y,z)$ that satisfy $$g(x,y,z):=\frac{x^2}{a^2}+\frac{y^2}{b^2}+\frac{z^2}{c^2}=1.$$ Geometrically this occurs when the normal to the surface is $\pm v$.

%
Using the method of Lagrange multipliers, the PHT of $E(a,b,c)$ is calculated to be\footnote{Note here that we are recording the persistence diagrams by listing the off diagonal points. They also contain countably infinite copies of the diagonal.} 
\begin{align*}
X_0(E(a,b,c), (v_1,v_2, v_3)) &= \left\{(-\sqrt{a^2v_1^2 + b^2v_2^2 + c^2v_3^2},\infty)\right\},\\
X_1(E(a,b,c), (v_1,v_2, v_3)) &= \left\{\right\},\\
X_2(E(a,b,c), (v_1,v_2, v_3)) &= \left\{(\sqrt{a^2v_1^2 + b^2v_2^2 + c^2v_3^2},\infty)\right\},\\
\end{align*}

We may or may not wish to normalize the ellipses. By symmetry these ellipsoids are already centered. If we were to normalize them with respect to size by the process described in section \ref{align} we would need to calculate, for each $E(a,b,c)$, the scaling factor
\begin{align} \label{eq:size}
I(a,b,c)=\frac{1}{4\pi}\int_{\beta=-\pi}^\pi \int_{\alpha=-\pi/2}^{\pi/2}  \sqrt{c^2\sin^2 \alpha + \cos^2\alpha  (a^2 \cos^2 \beta + b^2\sin^2 \beta)}\cos \alpha \, d\alpha \, d\beta.A
\end{align} 
All our integrals over the sphere are done in the polar coordinates $(\alpha, \beta) \mapsto (\cos \alpha \cos \beta, \cos \alpha \sin \beta, \sin \alpha)$. The Jacobian for this parameterization is $\cos\alpha$. Unfortunately the integral in \eqref{eq:size} does not have a nice closed form so we must use numerical integration to compute it. Given a triple $(a,b,c)$ we can rescale $E(a,b,c)$ to find a nomalized (with respect to size) ellipsoid 
$$\widehat{E}(a,b,c) = E\left(\frac{a}{I(a,b,c)},\frac{b}{I(a,b,c)}, \frac{c}{I(a,b,c)}\right).$$

To find the distances between the centered and normalized ellipses we would then need to consider the pairwise
the different distances under all alignments and take the minimal one.

We looked at a set of ellipsoids $$\{E(a,b,c): a,b,c \in \{1,2,3,4,5\}\},$$ with their size and location in space fixed. We computed all the pairwise distances  and then performed multi-dimensional scaling to the matrix of distances squared that resulted. After applying multidimensional scaling there were natural geometric interpretations of the coordinates. Furthermore, since ellipsoids are such nice algebraically defined sets it is possible to have formulae for the persistence diagrams. This allows us to find the true distance between the persistent homology transforms by (numerical) integration. We also computed the distances using the algorithm outlined in \ref{subsec:alg}. By computing the distances these two ways we can see how much error is introduced (in this example) through the  finite approximations of the surface by using a finite surface mesh and through the finite approximation of integral over the sphere by averaging using a finite number of directions. The relative error of the computed distances by the two different methods of computation was bounded by $0.0532$. The relative error was always positive (the algorithm always underestimated the distances compared to the distances computed by the integral) with mean $0.0244$. 

We now will show in detail the computation involved. Let $E_1=E(a_1,b_1,c_1)$ and $E_2=E(a_2, b_2, c_2)$. The symmetry of the ellipsoids implies that 
$$\d(X_0(E_1,v),X_0(E_2,v))=\d(X_2(E_1,v),X_2(E_2,v))$$ for all $v\in S^2$. Also none of the $X_1(E_i,v)$ have any off diagonal points and hence 
$$\int \d(X_1(E_1,v),X_1(E_2,v))\, dv=0.$$ Together these simplify the calculation of $\d(E_1, E_2)$.
\begin{align*}
\d&(E_1, E_2)\\
=& \frac{2}{\text{vol}(S^2)} \int_{v\in S^2} \d(X_0(E_1, v), X_0(E_2, v))\, dv\\
=&\frac{2}{4\pi}\int_{\beta=-\pi}^\pi \int_{\alpha=-\pi/2}^{\pi/2} d(X_0(E_1, v(\alpha,\beta)),X_0(E_2, v(\alpha,\beta))) \cos \alpha \, d\alpha\,  d\beta\\
=&\frac{2}{4\pi}\int_{\beta=-\pi}^\pi \int_{\alpha=-\pi/2}^{\pi/2} \bigg| \sqrt{c_1^2\sin^2 \alpha + \cos^2\alpha  (a_1^2 \cos^2 \beta + b_1^2\sin^2 \beta)}\\
&\qquad -\sqrt{c_2^2\sin^2 \alpha + \cos^2\alpha  (a_2^2 \cos^2 \beta + b_2^2\sin^2 \beta)} \bigg|\cos \alpha \, d\alpha \, d\beta.
\end{align*}

For the set of ellipsoids $\{E(a,b,c): a,b,c \in \{1,2,3,4,5\}\}$ we computed the distance matrix by the formula above and performed multidimensional scaling to this distance matrix. This analysis seemed to show that these persistent homology transforms of ellipsoids effectively lie in a 3 dimensional space. 
The first dimension (that corresponding the the highest eigenvalue) is effectively a linear function of the size of the ellipsoid (as measured by $I(a,b,c)$ as calculated in equation \eqref{eq:size}). This is illustrated in Figure \ref{fig:1score}.
\begin{figure}[hbt]
\begin{center}
\includegraphics[height=3in]{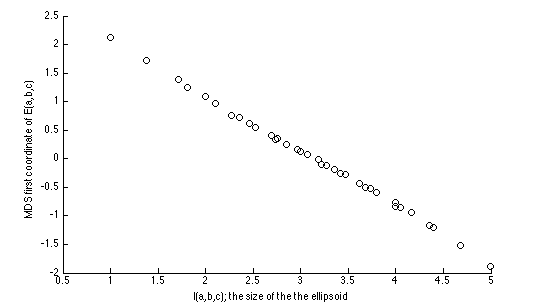}
\caption{The first coordinate of the MDS for $E(a,b,c)$ is effectively a linear function for $I(a,b,c)$ (the size of the ellipsoid).}\label{fig:1score}
\end{center}
\end{figure}

In next two dimensions, the vector of direction is symmetric in relation to the ratios of $a,b$ and $c$. When $a=b=c$, that is we have a sphere, the values of the second and third coordinate are both zero. When the ratios are equal then the points lie in the same direction. In Figure \ref{fig:23scores} we have plotted the second and third coordinates from the multi-dimensional scaling analysis. The eigenvalues corresponding to these two dimensions were equal so the choice of coordinates in this plane were arbitrary.

\begin{figure}[hbt]
\begin{center}
\includegraphics[height=4in]{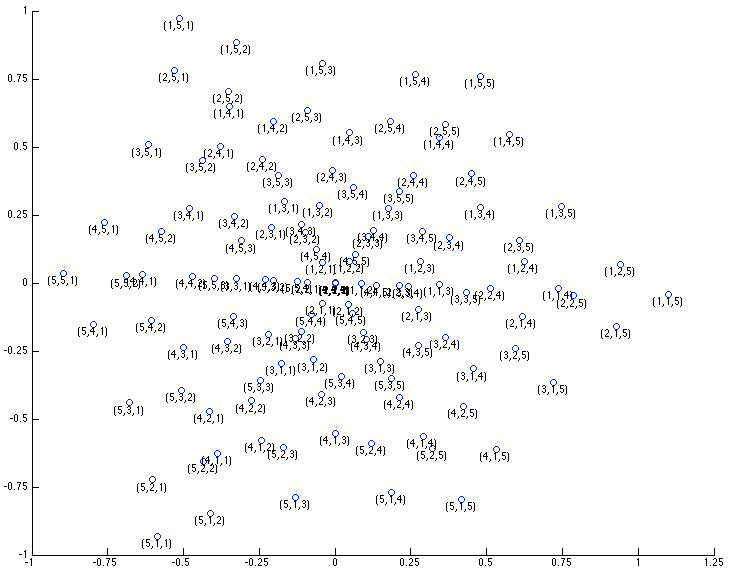}
\caption{The plot of the second and third MDS scores for the set of ellipsoids.}\label{fig:23scores}
\end{center}
\end{figure}

\newpage

\subsection{PHT of hperboloids with restricted $z$-values}
We now will consider a family of hyperboloids with restricted $z$-values -- cut-off hyperboloids. By this we mean sets of shapes of the form
$$Hyp(a,b,c):=\left\{(x,y,z): \frac{x^2}{a^2} + \frac{y^2}{b^2} - \frac{z^2}{c^2}=1 \text{ and } z\in [-1,1]\right\}.$$
Each $Hyp(a,b,c)$ is a surface whose boundary is the pair of ellipses
$\{(x,y,1): \frac{x^2}{a^2} + \frac{y^2}{b^2} =1+ \frac{1}{c^2}\}$ and $\{(x,y,-1): \frac{x^2}{a^2} + \frac{y^2}{b^2} =1+ \frac{1}{c^2}\}.$
An example of such a surface is drawn in Figure \ref{fig:hyperevents}.
\begin{figure}[ht]
\begin{center}
\includegraphics[height=2in]{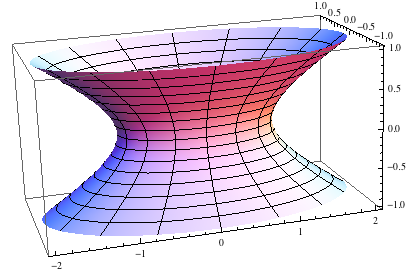}
\caption{A cut off hyperboloid.}\label{fig:hyperevents}
\end{center}
\end{figure}

We will now examine what the persistence diagrams that correspond to the height function in direction $v$ are, where $v$ is a unit vector $(v_1,v_2, v_3)$ with $v_1,v_2,v_3\geq0$. By symmetry we can then deduce the rest of the PHT. Let $M=Hyp(a,b,c)$. Recall that to compute $\PHT(M)(v)$ we care about the filtration of subsets of the form $M(v)_r:=\{ (x,y,z)\in M: (x,y,z)\cdot v\leq r\}$.
Changes in homology as we progress through this subset can occur only when either first contacting or completing one of the boundary ellipses or when the $M(v)_r$ encounters a point in $M$ whose normal is $\pm v$. We will now discuss each of these scenarios and find formulae for at what heights they occur.

\begin{table}
\begin{center}
\begin{tabular}{c|c|c}
&$j=1$ & $j=2$\\
\hline
$r_{l,j}(v,a,b,c)$ & $-\sqrt{1+\frac{1}{c^2}}\sqrt{a^2v_1^2 + b^2v_2^2} -v_3$ & $\sqrt{1+\frac{1}{c^2}}\sqrt{a^2v_1^2 + b^2v_2^2} -v_3$\\
\hline
$r_{u,j}(v,a,b,c) $ &$-\sqrt{1+\frac{1}{c^2}}\sqrt{a^2v_1^2 + b^2v_2^2} +v_3$ & $\sqrt{1+\frac{1}{c^2}}\sqrt{a^2v_1^2 + b^2v_2^2} +v_3$\\
\hline
$r_{i,j}(v,a,b,c)$ & $-\sqrt{a^2v_1^2 + b^2v_2^2-c^2 v_3^2}$ &$\sqrt{a^2v_1^2 + b^2v_2^2-c^2 v_3^2}$\\
(if it occurs)
\end{tabular}
\caption{Table of the heights at which homological changes may happen}\label{table:ellipsoidheights}
\end{center}
\end{table}

Let $r_{l, 1}$ and $r_{l,2}$, and $r_{u, 1}$ and $r_{u,2}$, be the heights that $M(v)_r$ first contacts and then completes the lower and upper boundary ellipses respectively. To compute $r_{l, 1}$ and $r_{l,2}$, or $r_{u, 1}$ and $r_{u,2}$, we can use the method of Langrange multipliers where we wish to find the extreme values of $f_l(x,y) =v_1x+ v_2y -v_3$, or $f_u(x,y) =v_1x+ v_2y +v_3$, with the constraint that $g(x,y)= \frac{x^2}{a^2} + \frac{y^2}{b^2} =1+ \frac{1}{c^2}$. The formulae we calculated are in Table A.1.

For some directions $v$ there will exist points $(x_1, y_1, z_1)$ and $(x_2, y_2, z_2)$ in $M$ with normal $\pm v$. If they exist, denote the heights at which $M(v)_r$ encounters such points by $r_{i,1}$ and $r_{i,2}$. If they exist, such points will satisfy $(\frac{2x}{a^2}, \frac{2y}{a^2}, \frac{2z}{a^2}) = \lambda (v-1, v_2, v_3)$ for some $\lambda$. Using the constraint $\frac{x^2}{a^2} + \frac{y^2}{b^2} - \frac{z^2}{c^2}=1$ we compute these heights (if they exist) and these are shown in Table A.1.
The corresponding points in $M$ (with normal $\pm v$) have $z$ coordinate $\pm c^2v_3/\sqrt{a^2v_1^2 + b^2v_2^2-c^2 v_3^2}$ and hence they exist in the cut off hyperboloid exactly when $c^2v_3/\sqrt{a^2v_1^2 + b^2v_2^2-c^2 v_3^2}< 1$. That is when 
\begin{align}\label{eq:condextra}
c^4v_3^2<a^2v_1^2 + b^2v_2^2-c^2 v_3^2.
\end{align}
%
All non-boundary points in $Hyp(a,b,c)$ are saddle points and thus they must be critical points of index $1$  for the Morse function defined as the height function in the direction of $v$. Being an index $1$ critical points means that including it must change the homology  -- causing either a decrease in $H_0$ or an increase in $H_1$ when it is encountered.  

%

The $X_0(Hyp(a,b,c),v)$ will always have  one essential class which is at $(r_{l,1}, \infty)$. If there are internal points with normal $\pm v$ then the upper boundary ellipse is contacted before any path from the lower boundary ellipse to the upper boundary ellipse is seen. Thus the upper boundary first appears as a second connected component. It then joins the first connected component once a path between the boundary ellipses is first completed which is when the first of the two point with normal $\pm v$ is included. The two components merge at height $r_{i,1}$. We thus have the point $(r_{u,1},  r_{i,1})$ as a second off diagonal point in $X_0(Hyp(a,b,c),v)$. Figure \ref{F:eccurves} shows the progression of important sublevel sets of the height function in direction $v$ over a cut off hyperboloid in this case.
\begin{figure}[hbt]
\begin{center}
\begin{tabular}{cc}
\begin{tabular}{c}
\includegraphics[width=2in]{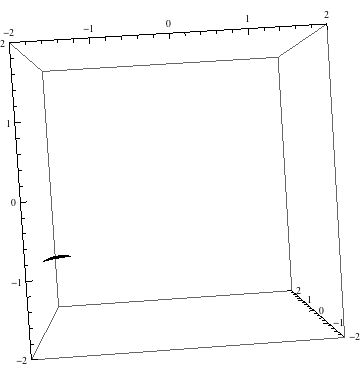}
\end{tabular}
\qquad &  \qquad
\begin{tabular}{c}
\includegraphics[width=2in]{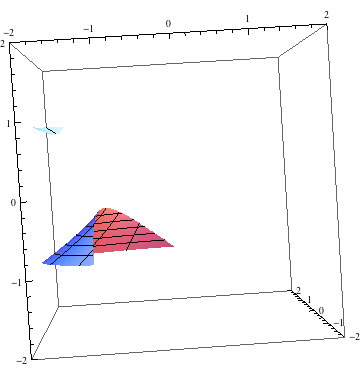}
\end{tabular}
\\
 \qquad (a) \qquad &  \qquad (b)\\
 \begin{tabular}{c}
\includegraphics[width=2in]{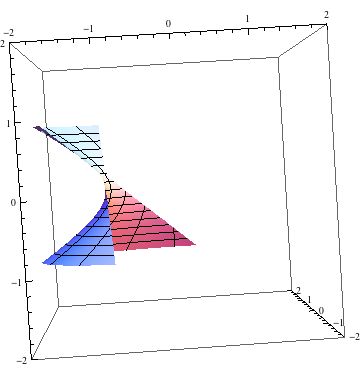}
\end{tabular}
\qquad &  \qquad
\begin{tabular}{c}
\includegraphics[width=2in]{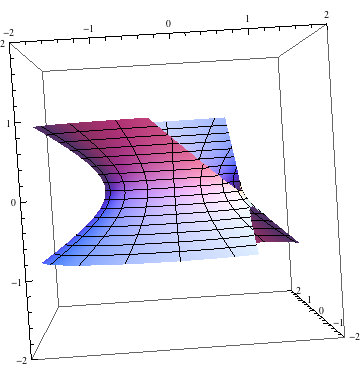}
\end{tabular}
\\
 \qquad (c) \qquad &  \qquad (d)\\
\end{tabular}
\caption{One possible progression of the sublevel sets of the height function in direction $v$ over a cut off hyperboloid. (a) The first time a sublevel set is non-empty an essential $H_0$ class is born. (b) Here the sublevel set touches the upper boundary without containing any path between the upper and lower boundary. A second $H_0$ persistent homology class is born. (c) A path between the upper and lower boundaries is found - happening when a point on the cut off hyperboloid has normal $\pm v$. The second $H_0$ persistent homology class dies. (d) An essential $H_1$ class is born- happening when another point on the cut off hyperboloid has normal $\pm v$. No changes in homology occur after this.}\label{F:eccurves}
\end{center}
\end{figure}

There is always exactly one essential class in $X_1(Hyp(a,b,c),v)$ and no other off diagonal points. The essential class is born when the loop around the hyperboloid is first completed.  If there are internal points with normal $v$ it will occur when the second of these points appears (at height $r_{i,1}$). Otherwise the loop first appears when the lower boundary loop is completed (at height $r_{l,2}$). In conclusion,  if $c^4v_3^2<a^2v_1^2 + b^2v_2^2-c^2 v_3^2$ then
\begin{align*}
X_0(Hyp(a,b,c),v)&=\{(r_{l,1}, \infty), (r_{u,1},  r_{i,1})\}\\
X_1(Hyp(a,b,c),v)&=\{(r_{i,2},\infty)\}
\end{align*}
and if $c^4v_3^2\geq a^2v_1^2 + b^2v_2^2-c^2 v_3^2$ then
\begin{align*}
X_0(Hyp(a,b,c),v)&=\{(r_{l,1}, \infty)\}\\
X_1(Hyp(a,b,c),v)&=\{(r_{l,2},\infty)\}
\end{align*}
where the $r_{l,j},r_{u,j}$ and $r_{i,j}$ are the formulae in Table \ref{table:ellipsoidheights}.

We now will analyze the distances within a one dimensional family of cut-off hyperboloids. We will fix the bounding ellipses to be
$$\{(x,y,z):\frac{x^2}{4} + y^2 = 1\text{ and } |z|=1\}.$$
We still have one parameter of freedom. Algebraically, for each $a\in (0,2)$, we can define a hyperboloid
$Hyp(a)= Hyp(a,a/2,a/\sqrt{4-a^2} )$. These hyperboloids satisfy our desired boundary condition and are determined by where they intersect the $x$ axis; $Hyp(a)$ intersecting at $\pm(a,0,0)$. The advantage of considering this family of cut off hyperboloids is that they have the same convex hull and hence are already are (up to the same constant scaling factor) normalized.

Again we will focus on $v$ in the positive quadrant. We have two different cases for what the diagrams are in direction $v$ for $Hyp(a)$ depending on whether$c^4v_3^2<a^2v_1^2 + b^2v_2^2-c^2 v_3^2$ or not. Given our relationships between $a,b$ and $c$ this condition can be written as
$16v_3^2 <(4-a^2)^2(4 v_1^2 + v_2^2).$
%
Importantly the formulae $r_{u,j}$ and $r_{l,j}$ are independent of  $a$ because they only depend on what height the boundary ellipses are contacted or completed, and these boundary ellipses are independent of $a$.

The $X_1(Hyp(a),v)$ only contains one essential class regardless of the direction $v$. Thus to compute the distance between $X_1(Hyp(a_1),v)$ and $X_1(Hyp(a_2),v)$ we just take the distance between the first coordinate of the only point in each diagram. The essential classes in the $H_0$ persistent homology, for every fixed $v$, are the same for all of the $Hyp(a)$, regardless of $a$, and so when computing the distance between $X_0(Hyp(a_1),v)$ and $X_0(Hyp(a_2),v)$ we can effectively ignore them. If both diagrams contain a finite persistence off diagonal point then they will have the same birth times (for the fixed direction $v$) and hence we should match them to each other rather than both to the diagonal.\footnote{This is an advantage of our choice of $p=1$ in the distance function on the space of persistence diagrams. If we were to use a different metric on the space of persistence diagrams this conclusion (that the two off diagonal point would be paired because they have the same birth time) would not generally hold.} If only one has a finite persistence off diagonal point then it has to be matched to the diagonal. 

We computed the distances (with small error due to using a finite approximation) via our algorithm. By inspection of the diagrams in the various cases we can see that the distance between the $\PHT$s is in fact exactly double the distance between the  $\PHzeroT$s. This would not hold for general cut off hyperboloids as it stems from having those fixed boundary ellipses. Since the $\PHzeroT$s are significantly faster to compute we computed these instead.

We considered the set of shapes $\{Hyp(a)\}$ for $a$ in $0.125$ increments from $0.125$ to $1.875$. 
After computing the matrix of distances we performed MDS. There was only one non-zero eigenvalue. In Figure \ref{fig:mdshyper} we plotted the scores in this coordinate with respect to the variable $a$.

\begin{figure}[hbt]
\begin{center}
\includegraphics[height=3in]{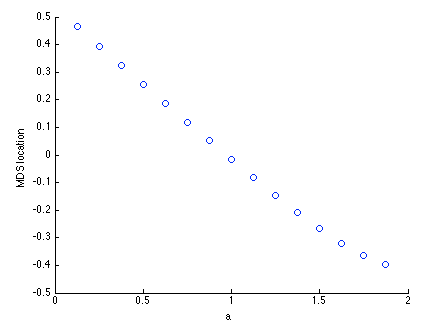}
\caption{The MDS coordinate of $Hyp(a)$}\label{fig:mdshyper}
\end{center}
\end{figure}

\bibliographystyle{plain}
\bibliography{refs}

\end{document}